\crefname{equation}{Eq.}{inequality}
\Crefname{equation}{Equation}{inequality}
\newcommand{\imag}{{\text{Im}}}
\newcommand{\real}{{\text{Re}}}
\newcommand{\bC}{{\mathbb C}}
\newcommand{\cF}{{\mathcal F}}
\newcommand{\cO}{{\mathcal O}}
\newcommand{\cI}{{\Omega}}
\newcommand{\sB}{{\mathscr B}}
\newcommand{\re}{\mathbb{R}}
\newcommand{\N}{\mathbb{N}}
\newcommand{\diag}{\mbox{diag}}
\newcommand{\eps}{\epsilon}
\newcommand{\Sig}{\Sigma}
\newcommand{\reff}[1]{(\ref{#1})}
\newcommand{\mc}[1]{\mathcal{#1}}
\newcommand{\bdes}{\begin{description}}
    \newcommand{\edes}{\end{description}}
\newcommand{\bal}{\begin{align}}
\newcommand{\eal}{\end{align}}
\newcommand{\bnum}{\begin{enumerate}}
    \newcommand{\enum}{\end{enumerate}}
\newcommand{\bit}{\begin{itemize}}
    \newcommand{\eit}{\end{itemize}}
\newcommand{\bea}{\begin{eqnarray}}
\newcommand{\eea}{\end{eqnarray}}
\newcommand{\be}{\begin{equation}}
\newcommand{\ee}{\end{equation}}
\newcommand{\baray}{\begin{array}}
    \newcommand{\earay}{\end{array}}
\newcommand{\bsry}{\begin{subarray}}
    \newcommand{\esry}{\end{subarray}}
\newcommand{\bca}{\begin{cases}}
    \newcommand{\eca}{\end{cases}}
\newcommand{\bcen}{\begin{center}}
    \newcommand{\ecen}{\end{center}}
\newcommand{\bbm}{\begin{bmatrix}}
    \newcommand{\ebm}{\end{bmatrix}}
\newcommand{\bmx}{\begin{matrix}}
    \newcommand{\emx}{\end{matrix}}
\newcommand{\bpm}{\begin{pmatrix}}
    \newcommand{\epm}{\end{pmatrix}}
\newcommand{\btab}{\begin{tabular}}
    \newcommand{\etab}{\end{tabular}}
\newtheorem{theorem}{Theorem}[section]
\newtheorem{lemma}[theorem]{Lemma}
\theoremstyle{definition}
\newtheorem{alg}[theorem]{Algorithm}
\newtheorem{remark}[theorem]{Remark}
\numberwithin{equation}{section}
\begin{document}
\title[Gaussian Mixture Models and Tensor Decompositions]
{Diagonal Gaussian Mixture Models and Higher Order Tensor Decompositions}

\author[Bingni Guo]{Bingni~Guo\textsuperscript{$\dagger$}}
\address{\textsuperscript{$\dagger$} Department of Mathematics, University of California San Diego,
9500 Gilman Drive, La Jolla, CA, USA, 92093.}
\email{\textsuperscript{$\dagger$}b8guo@ucsd.edu, njw@math.ucsd.edu}

\author[Jiawang Nie]{Jiawang~Nie\textsuperscript{$\dagger$}}

\author[Zi Yang]{Zi~Yang\textsuperscript{$\ddagger$}}
\address{\textsuperscript{$\ddagger$} Department of Mathematics and Statistics, University at Albany, State University of New York,
1400 Washington Ave, Albany, NY, USA, 12222.}
\email{\textsuperscript{$\ddagger$}zyang8@albany.edu}

\subjclass[2020]{15A69,65F99}

\keywords{Gaussian mixture, symmetric tensor decomposition,
generating polynomial, moments}

\begin{abstract}
This paper studies how to recover parameters in diagonal Gaussian mixture models using tensors. High-order moments of the Gaussian mixture model are estimated from samples. They form incomplete symmetric tensors generated by hidden parameters in the model. We propose to use generating polynomials to compute incomplete symmetric tensor approximations. The obtained decomposition is utilized to recover parameters in the model. We prove that our recovered parameters are accurate when the estimated moments are accurate. Using high-order moments enables our algorithm to learn Gaussian mixtures with more components. For a given model dimension and order, we provide an upper bound of the number of components in the Gaussian mixture model that our algorithm can compute.
\end{abstract}

\maketitle

\section{Introduction}

Gaussian mixture models are widely used in statistics and machine learning \cite{karpagavalli2016review,povey2011subspace,reynolds1995speaker,zivkovic2004improved,lee2005effective,veracini2009fully} because of their simple formulation and superior fitting ability. Every smooth density can be approximated by Gaussian mixture models with enough components \cite{goodfellow2016deep}. A Gaussian mixture model is a mixture of a few Gaussian distribution components. We consider a Gaussian mixture model of dimension $d$ with $r$ components. Suppose that the $i$th component obeys the Gaussian distribution with mean $\mu_i \in \mathbb{R}^d$ and covariance matrix $\Sigma_i\in \mathbb{R}^{d\times d}$ and the probability of the $i$th component is $\omega_i>0$. Let $y\in\mathbb{R}^d$ be the random vector for the Gaussian mixture, then its density function is
\begin{equation*}
\sum_{i=1}^r\omega_i
\frac{1}{\sqrt{(2\pi)^d \det \Sigma_i} }
\exp\Big\{-\frac{1}{2}(y-\mu_i)^T \Sigma_i^{-1} (y-\mu_i)  \Big\}.
\end{equation*}
Learning the Gaussian mixture model is to recover the unknown model parameters $\omega_i,\mu_i,\Sigma_i$ for $i\in[r]$ from given independent and identically distributed (i.i.d.) samples from the Gaussian mixture. The curse of dimensionality \cite{anderson2014more} is a challenging problem when learning Gaussian mixtures with full covariance matrices since the size of full covariance matrices grows quadratically with respect to the dimension $d$. Therefore, using a diagonal matrix to approximate the full variance matrix is a common approach to fit high dimensional Gaussian mixture models \cite{magdon2010approximating}.

Throughout this paper, we focus on learning Gaussian mixture models with diagonal covariance matrices, i.e.,
\begin{equation*}
\Sigma_i=\diag(\sigma_{i1}^2,\ldots,\sigma_{id}^2),\quad i=1,\ldots,r.	
\end{equation*}
Let $M_m:=\mathbb{E}[y^{\otimes m}]$ be the $m$th order moment of the Gaussian mixture and
\begin{equation} \label{eq:tensorF}
    \cF_{m}:= \omega_1 \mu_1^{\otimes m}+\cdots  \omega_r \mu_r^{\otimes m} .
\end{equation}
The third order moment $M_3$ satisfies that
\begin{equation}\label{incomplete_tensor}
	(M_3)_{i_1i_2i_3}=(\cF_3)_{i_1i_2i_3}\quad\text{for }i_1\neq i_2\neq i_3\neq i_1.
\end{equation}
Based on the property, the work \cite{guo2021learning} performs the incomplete tensor decomposition to learn diagonal Gaussian mixture models using partially given entries of the moment tensor when $r\leq \frac{d}{2}-1$. This result uses the first and third-order moments to recover unknown model parameters. However, the third order moment $M_3$ is insufficient when $r > \frac{d}{2}-1$.

In this paper, we propose to utilize higher-order moments to learn Gaussian mixture models
with more components. The higher-order moments can be expressed
by means and covariance matrices as in the work \cite{ge2015learning}.
%
%
Let ${z}=(z_1,\cdots,z_{t})$ be a multivariate Gaussian random vector
with mean $\mu$ and covariance $\Sigma$, then
\begin{equation}   \label{moment-structure}
    \mathbb{E}[z_1\cdots z_{t}]=
    \sum_{ \substack{\lambda\in P_t \\  \lambda=\lambda_{p}\cup \lambda_s}  }\,
    \prod_{(u,v)\in \lambda_{p}}\Sigma_{u,v}\prod\limits_{c\in \lambda_{s}}\mu_c,
\end{equation}
where $P_t$ contains all distinct ways of partitioning $z_1,\cdots,z_{t}$ into two parts,
one part $\lambda_{p}$ represents $p$ pairs of $(u,v)$,
and another part $\lambda_s$ consists of $s$ singletons of $(c)$,
where $p\geq 0,\, s \geq 0$ and $2p+s=t$.
%
%

We denote the label set $\Omega_m$ for $m$th order tensor $M_m$
\begin{equation}\label{index-set}
	\Omega_m=\{(i_1,\ldots,i_m): i_1,\ldots,i_m \text{ are distinct from each other}\}.
\end{equation}
Let $z_i\sim\mathcal{N}(\mu_i,\Sigma_i)$ be the random vector for the $i$th component of the diagonal Gaussian mixture model. For $(i_1,\ldots,i_m)\in\Omega_m$,
the expression \ref{moment-structure} implies that
\begin{align}
    (M_m)_{i_1\ldots i_m}&=\sum\limits_{i=1}^r\omega_i\left(\mathbb{E}[z_i^{\otimes m}]\right)_{i_1\ldots i_m}\notag\\
    &=\sum\limits_{i=1}^r\omega_i\mathbb{E}[(z_i)_{i_1}\cdots(z_i)_{i_m}]\notag\\
    &=\sum\limits_{i=1}^r\omega_i
     \sum_{ \substack{\lambda\in P_t \\  \lambda=\lambda_{p}\cup \lambda_s}  }\,
    \prod_{(u,v)\in \lambda_{p}}(\Sigma_i)_{u,v}\prod\limits_{c\in \lambda_{s}}(\mu_i)_c \notag\\
    &=\sum\limits_{i=1}^r\omega_i (\mu_i)_{i_1}\cdots (\mu_i)_{i_m} \notag \\
    &= (\cF_m)_{i_1\ldots i_m}, \notag
\end{align}
where $P_t$ contains all distinct ways of partitioning $\{i_1,\ldots,i_m\}$ into two parts and $\lambda_p,\lambda_s$ are similarly defined as in \ref{moment-structure}. When $\lambda_p\neq \emptyset$, we have $(\Sigma_i)_{u,v}=0$  for diagonal covariance matrices. Thus, we only need to consider $\lambda_p=\emptyset$ and $\lambda_s=\{i_1,\ldots,i_m\}$. It demonstrates the above equations. We conclude that the moment tensors for diagonal Gaussian mixtures satisfy
\be\label{high-tensor-moment}
(M_m)_{i_1\ldots i_m}=(\cF_m)_{i_1\ldots i_m}
\ee
where $\cF_m=\sum\limits_{i=1}^r \omega_i\mu_i^{\otimes m}$ and $(i_1,\ldots,i_m)\in\Omega_m$.

When learning the Gaussian mixture from samples, the moment tensors $M_m$ can be well approximated by samples. Therefore, the entries $(\cF_m)_{\Omega_m}$ can be obtained from the approximation of $M_m$ by \eqref{high-tensor-moment}. This leads to the incomplete tensor decomposition problem. For an $m$th order symmetric tensor, $\cF_{i_1\ldots i_m}$ is known if $(i_1,\ldots,i_m)\in\Omega_m$, we wish to find vectors $q_1,\ldots,q_r$ such that
\begin{equation}\label{high-incomplete-decomp}
	\cF_{i_1\ldots i_m}=(q_1^{\otimes m}+\cdots+q_r^{\otimes m})_{i_1\ldots i_m}\quad\text{for all }(i_1,\ldots,i_m)\in\Omega_m.
\end{equation}
To solve the above incomplete tensor decomposition, one approach is
to complete the unknown entries first and then do the regular tensor decomposition.
We refer to \cite{FriLim18,MHWG,njwSTNN17,TanSha15,YuaZha16} for low rank tensor completion.
However, the low-rank completion is not guaranteed.

In this paper, we propose an algorithm to solve the incomplete tensor decomposition problem by generating polynomials and then utilize the decomposition to recover parameters in the Gaussian mixture. They are described in Algorithm \ref{algo:iSTD} and Algorithm \ref{algo:gaussian}. For a fixed dimension and tensor order, we provide the largest number of components, i.e., the tensor rank, our algorithm can compute in Theorem \ref{largest-r}.


\subsection*{Related Work} \, The Gaussian mixture model is a classic statistical model with broad applications in applied statistics and machine learning. Many techniques have been proposed to estimate unknown parameters for Gaussian mixture models given samples.

Expectation-maximization(EM) algorithm is one popular method that begins with some initial estimates of the mixture of Gaussians and then iteratively updates the approximation to improve its likelihood\cite{dempster1977maximum}. It is well known that it often gets stuck in local optima or fails to converge\cite{redner1984mixture}. Dasgupta \cite{dasgupta1999learning} proposed an algorithm to recover parameters in randomly low-dimensional projected subspace and required certain separation conditions for the mean vectors of the model. We refer to \cite{dasgupta2013two, sanjeev2001learning, vempala2004spectral, achlioptas2005spectral} more works on the EM algorithm.

The method of moments is another approach, introduced by Pearson~\cite{pearson1894contributions}.
Moitra and Valiant~\cite{moitra2010settling} proposed a learning algorithm
to search for the best parameters over a discretized set to match the empirical moments. Kalai, et al.~\cite{kalai2010efficiently}
proposed a randomized algorithm for a mixture of two high dimensional Gaussians with arbitrary covariance matrices.
The algorithm reduces the model to a univariate problem and solves it using moments up to order six. Hsu and Kakade~\cite{hsu2013learning} and Anandkumar et al.~\cite{anandkumar2017analyzing} used moments up to order three to learn Gaussian mixtures of spherical covariances $(\Sigma_i=\sigma_i^2 I)$. Ge et al.~\cite{ge2015learning} provided a learning algorithm for Gaussian mixtures of general covariances using moments up to order six. 
This algorithm requires $d\geq\Omega(r^2)$. Algebraic methods using moments and polynomials can be applied to fit diagonal or spherical Gaussian mixtures, as in Guo et at.~\cite{guo2021learning}, Khouja et al.~\cite{khouja2022tensor}, Lindberg et al.~\cite{lindberg2021estimating}, Belkin and Sinha \cite{belkin2010polynomial}, and Bhaskara et al.~\cite{bhaskara2014smoothed}.
Recently, Pereira et al. \cite{PKK22}, Sherman and Kolda \cite{sherman2020estimating}
proposed moment-matching approaches based on numerical optimization. 
These approaches do not need the explicit formulation of large symmetric tensors, for reducing computation and storage costs.



\subsection*{Contributions}  This paper proposes a novel algorithm to learn diagonal Gaussian mixture models. The proposed algorithm is based on high-order moment tensors, incomplete tensor decompositions, and generating polynomials. Let $y_1,\ldots,y_N$ be i.i.d. samples from the diagonal Gaussian mixture. Learning the Gaussian mixture is to estimate the model parameters $\{\omega_i,\mu_i,\Sigma_i\}_{i=1}^r$, where each $\Sigma_i$ is diagonal.

Let $M_{m}$ be the $m$th order moment of the Gaussian mixture respectively. For the tensors in \eqref{eq:tensorF}, the entries $(\cF_{m})_{i_1\ldots i_m}$ are known from $M_m$ for distinct labels $i_1,\ldots,i_m$ by \eqref{high-tensor-moment}. We use generating polynomials to compute the decomposition of $\cF_{m}$. We first construct a set of linear systems that only use the known entries in $\cF_m$ and then solve the linear systems to get a set of special generating polynomials. The common zeros of the generating polynomials can be obtained from corresponding eigenvalue decompositions. Under some genericity conditions, we can recover the decomposition of $\cF_m$ using the common zeros. Finally, the unknown parameters $\{\omega_i,\mu_i,\Sigma_i\}_{i=1}^r$ can be obtained by solving linear systems using the tensor decomposition and lower order moments. In real applications, the moment $M_m$ is estimated from samples, which are not accurately given. In such a case, our algorithm can still find a good approximation of $\cF_m$ and use the approximation to estimate the unknown parameters. We prove that the estimated parameters are accurate provided that the estimated moment tensors are accurate.

The previous work \cite{guo2021learning} only uses the first and third order moments to learn the Gaussian mixture model. Thus, the number of components must satisfy $r\le \frac{d}{2}-1$. The proposed algorithms in this work use high order moments and hence can learn Gaussian mixture models with more components. For a given highest order $m$, we provide the largest number of components our algorithm can compute in Theorem \ref{largest-r}. Conversely, Theorem \ref{largest-r} can be used to find the smallest order $m$ needed for the given number of components.

The paper is organized as follows. In Section~\ref{sc:pre}, we review symmetric tensor decompositions and generating polynomials. In Section~\ref{sc:tdc}, we develop a novel algorithm that uses generating polynomials to solve the incomplete tensor decomposition problem arising from the Gaussian mixture problem. Section~\ref{sc:approximation} discusses how to find the tensor approximation when errors exist and provides the error analysis. Section~\ref{sc:gmm} provides the algorithm to learn Gaussian mixture models using incomplete symmetric tensor decompositions. Section~\ref{sc:exp} presents the numerical experiments for the proposed algorithms.

\section{Preliminary}
\label{sc:pre}

\subsection*{Notation}
Denote $\mathbb{N}$, $\mathbb{C}$ and $\mathbb{R}$ the set of nonnegative integers, complex and real numbers respectively. Denote the cardinality of a set $L$ as $|L|$. Denote by $e_i$ the $i$th standard unit basis vector, i.e., the $i$the entry of $e_i$ is one
and all others are zeros. For a complex number $c$, $\sqrt[n]{c}$ or $c^{1/n}$ denotes the principal $n$th root of $c$. For a complex vector $v$, $\text{Re}(v),\,\text{Im}(v)$ denotes the real part and imaginary part of $v$ respectively. A property is said to be generic if it is true in the whole space except a subset of zero Lebesgue measure.
The $\|\cdot\|$ denotes the Euclidean norm of a vector or the Frobenius norm of a matrix.
For a vector or matrix, the superscript $^T$ denotes the transpose
and $^H$ denotes the conjugate transpose. For $i,j \in \N$, $[i]$ denotes the set $\{1,2,\ldots,i\}$ and $[i,j]$ denotes the set $\{i,i+1,\ldots,j\}$ if $i \le j$.
For a vector $v$, $v_{i_1:i_2}$ denotes the vector $(v_{i_1},v_{i_1+1},\ldots,v_{i_2})$. For a matrix $A$, denote by $A_{[i_1:i_2, j_1:j_2]}$ the submatrix of $A$ whose row labels are $i_1,i_1+1,\ldots,i_2$ and whose column labels are $j_1,j_1+1,\ldots,j_2$.
For a tensor $\cF$, its subtensor is similarly defined.
%

%
%
%

Let $\textrm{S}^m(\bC^{d})$ (resp., $\textrm{S}^m(\re^{d})$) denote the space of
$m$th order symmetric tensors over the vector space $\bC^{d}$ (resp., $\re^{d}$).
For convenience of notation, the labels for tensors start with $0$.
A symmetric tensor $\mathcal{A} \in \textrm{S}^m(\bC^{n+1})$ is labelled as
\[
\mathcal{A} \, = \, (\mathcal{A}_{i_1...i_m} )_{ 0\le i_1, \ldots, i_m \le n} ,
\]
where the entry $\mathcal{A}_{i_1\ldots i_m}$ is invariant for all permutations of
$(i_1,\ldots,i_m)$. The Hilbert-Schmidt norm $\|\mathcal{A}\|$ is defined as
\begin{equation} \label{tensor:norm:HS}
\|\mathcal{A}\| \,:= \,
\Big(\sum\limits_{0\leq i_1,{\ldots},i_m\leq n}|
\mathcal{A}_{i_1{\ldots} i_m}|^2\Big)^{1/2}.
\end{equation}
The norm of a subtensor $\| \mc{A}_{\Omega} \|$ is similarly defined.
For a vector $\mu:=(\mu_0,\ldots,\mu_n)\in \bC^{n+1}$, the tensor power
$\mu^{\otimes m}:=\mu\otimes \cdots \otimes \mu$, where $\mu$ is repeated $m$ times,
is defined such that
\[
(\mu^{\otimes m})_{i_1\ldots i_m} \, = \, \mu_{i_1}
    \times \cdots \times \mu_{i_m} .
\]
For a symmetric tensor $\mathcal{F}$, its symmetric rank is
\begin{equation*}
\text{rank}_{\textrm{S}}(\mathcal{F})\coloneqq
\text{min}{\Big \{r \, \mid \, \mathcal{F}=\sum\limits_{i=1}^r \mu_i^{\otimes m}\Big\}}.
\end{equation*}
There are other types of tensor ranks \cite{Lim13}.
In this paper, we only deal with symmetric tensors and symmetric ranks.
We refer to \cite{CLQY20,DeSLim08,Fri16,HLim13,Lim13}
for general work about tensors and their ranks.
For convenience, if $r=\text{rank}_{\textrm{S}}(\mathcal{F})$,
we call $\mathcal{F}$ a rank-$r$ tensor and
$\mathcal{F}=\sum\limits_{i=1}^r \mu_i^{\otimes m}$ is called a rank decomposition.

For a power $\alpha\coloneqq(\alpha_1,\alpha_2,\cdots,\alpha_{n})\in\mathbb{N}^{n}$
and $x\coloneqq(x_1,x_2,\cdots,x_{n})$, denote
\[
|\alpha|\coloneqq\alpha_1+\alpha_2+\cdots+\alpha_{n},\quad
x^{\alpha}\coloneqq x_1^{\alpha_1}x_2^{\alpha_2}\cdots x_{n}^{\alpha_{n}}, \quad
x_0 :=1.
\]
The monomial power set of degree $m$ is denoted as
\[
\mathbb{N}^n_m \coloneqq \{\alpha=(\alpha_1,\alpha_2,\cdots,\alpha_{n})
\in\mathbb{N}^n:|\alpha|\leq m\}.
\]
The symmetric tensor $\cF\in \textrm{S}^m(\mathbb{C}^{n+1})$ 
can be labelled by the monomial power set $\mathbb{N}^n_m$, i.e.,
\[
    \cF_\alpha=\cF_{x^\alpha}=\cF_{i_1\ldots i_m}
\]
where $x^\alpha=x_0^{m-|\alpha|}x^\alpha=x_{i_1}\ldots x_{i_m}$. 

For a finite set $\mathscr{B} \subset \bC[x]$ of monomials and a vector $v\in \bC^n$, we denote the vector of monomials in $\mathscr{B}$ evaluated on $v$ as
\[
    [v]_\mathscr{B}:=(f(v))_{f\in \mathscr{B}}.
\]
%
%

Let $\mathbb{C}[x]_m$ be the space of all polynomials in $x$ with complex coefficients
and degrees no more than $m$. For a polynomial
$p\in\mathbb{C}[x]_m$ and a symmetric tensor $\mathcal{F}\in \textrm{S}^m(\mathbb{C}^{n+1})$,
we define the bilinear product (note that $x_0 = 1$)
\begin{equation}
    \langle p,\mathcal{F}\rangle=\sum\limits_{\alpha \in \N_m^n} p_\alpha \cF_\alpha
    \quad\text{for}\quad p \, =\sum\limits_{\alpha \in \N_m^n} p_\alpha x^\alpha,
\end{equation}
where $p_\alpha$'s are coefficients of $p$.
A polynomial $g\in\mathbb{C}[x]_m$ is called a {\it generating polynomial}
for a symmetric tensor $\mathcal{F} \in \textrm{S}^m(\bC^{n+1})$ if
\begin{equation}\label{gene_poly}
    \langle g\cdot x^{\beta},\ \mathcal{F}\rangle=0\quad\forall\beta\in\mathbb{N}_{m-\text{deg}(g)}^n ,
\end{equation}
where $\text{deg}(g)$ denotes the degree of $g$ in $x$.
We refer to \cite{nie2017generating} for more details of generating polynomials.
The generating polynomials are powerful tools to compute low-rank tensor decompositions and approximations \cite{guo2021learning,nie2017generating,nie2017low}.
More work about tensor optimization can be found in
\cite{DNY20,NieSOSbd,Tight19,MPO23,NieZhang18,NieYang20,NieYe19}.

\section{Incomplete Symmetric Tensor Decompositions}\label{sc:tdc}
In this section, we discuss how to solve the incomplete symmetric tensor decomposition problem arising from learning Gaussian mixtures with parameters $\{\omega_i,\mu_i,\Sigma_i\}_{i=1}^r$. Let $\cF_m\in S^{m}(\mathbb{C}^{d})$ be the symmetric tensor. In the following, we discuss how to obtain the decomposition of $\cF_{m}$ given entries $(\cF_m)_{\Omega_m}$.

For convenience, we denote $n\coloneqq d-1$. Suppose that $\cF_m$ has the decomposition
\begin{equation} \label{eq:cFm}
    \cF_m = \omega_1 \mu_1^{\otimes m}+\cdots  \omega_r \mu_r^{\otimes m},
\end{equation}
where $\mu_i=((\mu_i)_0,(\mu_i)_1,\ldots,(\mu_i)_n)\in \bC^{n+1} $.
When the leading entry of each $\mu_i$ is nonzero, we can write the decomposition \eqref{eq:cFm} as
\begin{equation}\label{F-decomp}
        \cF_m=\lambda_1\begin{bmatrix}
        1\\u_1\end{bmatrix}^{\otimes m}+\cdots+\lambda_r\begin{bmatrix}
        1\\u_r\end{bmatrix}^{\otimes m},
\end{equation}
where $\lambda_i=\omega_i ((\mu_1)_0)^m$, and $u_i=((u_i)_1,\ldots,(u_i)_n)=(\mu_i)_{1:n}/(\mu_i)_0\in \bC^n$.

Let $1\le p \le m-2$ and $p\le k \le n-m-p$ 
be numbers such that 
\[
\binom{k}{p}\ge r  \quad \text{and} \quad  \binom{n-k-1}{m-p-1}\ge r .
\]
Define the set
\begin{equation}\label{B0}
\mathscr{B}_0\subseteq\{x_{i_1}\cdots x_{i_{p}}:1\leq i_1<\cdots<i_{p}\leq k\}
\end{equation}
such that $\mathscr{B}_0$ consists of the first $r$ monomials in the graded lexicographic order. 
Correspondingly, the set $\mathscr{B}_1$ is defined as
\begin{equation}\label{B1}
\mathscr{B}_1\coloneqq\{x_{j_1}\cdots x_{j_{p+1}}:1\leq j_1<\cdots<j_{p}\leq k<j_{p+1}\leq n\}.\end{equation}
For convenience, we say $\alpha \in \mathbb{N}^n$ is in $\mathscr{B}_0$ (resp. $\mathscr{B}_1$) 
if $x^\alpha \in $ $\mathscr{B}_0$ (resp. $\mathscr{B}_1$).
Let $\alpha=e_{j_1}+\cdots+e_{j_p}+e_{j_{p+1}}\in\mathscr{B}_1$ and 
$G\in\mathbb{C}^{\mathscr{B}_0\times\mathscr{B}_1}$ 
be a matrix labelled by monomials in $\mathscr{B}_0$ and $\mathscr{B}_1$. We consider the polynomial
\[
\varphi_{j_1\cdots j_{p}j_{p+1}}[G](x) \, \coloneqq  \,
   \sum\limits_{(i_1,\ldots,i_p)\in\mathscr{B}_0}G(\sum_{t=1}^pe_{i_t},\,\sum_{t=1}^{p+1}e_{j_t})x_{i_1}\cdots x_{i_{p}}
   -x_{j_1}\cdots x_{j_{p}}x_{j_{p+1}}.
\]
Recall that $\varphi_{j_1\cdots j_{p+1}}[G](x)$ 
is a generating polynomial for $\cF_m$ if it satisfies \reff{gene_poly}, i.e.
\begin{equation*}
	\langle \varphi_{j_1\cdots j_{p}j_{p+1}}[G](x)\cdot x^{\beta}, \ \mathcal{F}_m\rangle=0\quad\forall\beta\in\mathbb{N}_{m-p-1}^n.
\end{equation*}
The matrix $G$ is called a generating matrix if $\varphi_{j_1\cdots j_{p+1}}[G](x)$ is a generating polynomial.
If the matrix $G$ is a generating matrix of $\cF_m$, it should satisfy the equations
\begin{equation}\label{generating_mat}
\langle x_{s_1}\cdots x_{s_{m-p-1}}\varphi_{j_1\cdots j_{p+1}}[G](x),\,\cF_m\rangle = 0
\end{equation}
for each $\alpha=e_{j_1}+\cdots+e_{j_p}+e_{j_{p+1}}\in\mathscr{B}_1$ and each tuple $(s_1,\ldots,s_{m-p-1})\in\mathcal{O}_{\alpha}$, where
\[
\mathcal{O}_{\alpha}\coloneqq \left\{(s_1,\ldots,s_{m-p-1}):
\begin{array}{l}
     k+1\leq s_1<\ldots<s_{m-p-1}\leq n,  \\
      s_1\ne j_{p+1},\ldots, s_{m-p-1}\ne j_{p+1}
\end{array}\right\}.
\]
Define the matrix $A[\alpha,\cF_m]$ and the vector $b[\alpha,\cF_m]$ be such that
\begin{equation}\label{eq: Aalpha}
\left\{
\begin{aligned}
A[\alpha,\cF_m]_{\gamma,\beta}&\coloneqq (\cF_m)_{\beta+\gamma},\quad\forall(\gamma,\beta)\in\mathcal{O}_{\alpha}\times\mathscr{B}_0\\
b[\alpha,\cF_m]_{\gamma}&\coloneqq(\cF_m)_{\alpha+\gamma},\quad\forall\gamma\in\mathcal{O}_{\alpha}.
\end{aligned} \right.
\end{equation}
The dimension of $A[\alpha,\cF_m]$ is $\binom{n-k-1}{m-p-1}\times r$ and the equations in \reff{generating_mat} can be equivalently written as
\begin{equation}\label{solve-G}
	A[\alpha,\cF_m]\cdot G(:,\alpha)=b[\alpha,\cF_m].
\end{equation}

Lemma \ref{lemma: lid A} proves that the matrix $A[\alpha,\cF_m]$  in \eqref{eq: Aalpha} has full column rank under some genericity conditions.
\begin{lemma}\label{lemma: lid A}
    Suppose that $\binom{k}{p}\ge r$ and $\binom{n-k-1}{m-p-1}\ge r$. Let $\cF_m$ be the tensor with the decomposition \eqref{F-decomp}.
    If vectors $\{[u_i]_{\sB_0}\}_{i=1}^r$ and $\{[u_i]_{\cO_\alpha}\}_{i=1}^r$ are both linearly independent, then the matrix $A[\alpha,\cF_m]$ as in \eqref{eq: Aalpha} has full column rank.
\end{lemma}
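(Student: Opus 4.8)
The plan is to exploit the multiplicative structure of the entries of $\cF_m$ that is forced by the rank decomposition \eqref{F-decomp}, and thereby display $A[\alpha,\cF_m]$ as a product of three matrices whose ranks are controlled by the two hypotheses. First I would record the explicit entry formula: since $x_0=1$, evaluating \eqref{F-decomp} gives, for every power $\delta\in\N^n$ with $|\delta|\le m$,
\[
(\cF_m)_\delta=\sum_{i=1}^r\lambda_i\,u_i^{\delta},
\]
and this is an exact identity for the full tensor, not merely on $\Omega_m$. Applying it to $\delta=\beta+\gamma$ with $\beta\in\sB_0$ and $\gamma\in\cO_\alpha$, and using the factorization $u_i^{\beta+\gamma}=u_i^{\beta}\,u_i^{\gamma}$, I obtain
\[
A[\alpha,\cF_m]_{\gamma,\beta}=(\cF_m)_{\beta+\gamma}
=\sum_{i=1}^r\lambda_i\,\big([u_i]_{\cO_\alpha}\big)_\gamma\,\big([u_i]_{\sB_0}\big)_\beta.
\]

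Next I would read this off as a matrix factorization. Let $V_1:=\big[\,[u_1]_{\cO_\alpha},\ldots,[u_r]_{\cO_\alpha}\,\big]$ be the $\binom{n-k-1}{m-p-1}\times r$ matrix with these columns, let $V_0:=\big[\,[u_1]_{\sB_0},\ldots,[u_r]_{\sB_0}\,\big]$ be the $r\times r$ matrix (here $|\sB_0|=r$, because $\binom{k}{p}\ge r$ guarantees there are at least $r$ admissible monomials and $\sB_0$ selects exactly the first $r$ of them), and let $D:=\diag(\lambda_1,\ldots,\lambda_r)$. The entrywise identity above is precisely
\[
A[\alpha,\cF_m]=V_1\,D\,V_0^{\,T}.
\]

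Finally I would conclude the rank claim. The hypothesis that $\{[u_i]_{\cO_\alpha}\}_{i=1}^r$ is linearly independent says $V_1$ has full column rank $r$; the hypothesis that $\{[u_i]_{\sB_0}\}_{i=1}^r$ is linearly independent says the square matrix $V_0$ is nonsingular; and $D$ is nonsingular because each $\lambda_i=\omega_i((\mu_i)_0)^m\neq 0$ (indeed, a genuine rank-$r$ decomposition forces all $\lambda_i\neq 0$, and in the model setting $\omega_i>0$ with nonzero leading entries). Hence $D\,V_0^{\,T}$ is an invertible $r\times r$ matrix, so $A[\alpha,\cF_m]$ has the same column space as $V_1$ and therefore full column rank $r$.

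The argument is essentially bookkeeping, so I do not expect a deep obstacle; the one point that requires care is the very first reduction, namely verifying that the entries indexed by $\cO_\alpha\times\sB_0$ are genuinely governed by the clean product formula. Because $\sB_0$ uses only the variables $x_1,\ldots,x_k$ while $\cO_\alpha$ uses only $x_{k+1},\ldots,x_n$, the supports of $\beta$ and $\gamma$ are disjoint and $|\beta+\gamma|=m-1$, so $x^{\beta+\gamma}$ corresponds to a genuine, distinctly indexed tensor entry carrying a single leading $x_0$ factor. This is exactly what makes $A[\alpha,\cF_m]$ well defined and renders its Vandermonde-type factorization $V_1 D V_0^{\,T}$ transparent.
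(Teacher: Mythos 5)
Your proof is correct and is essentially the paper's own argument: the factorization $A[\alpha,\cF_m]=V_1\,D\,V_0^{\,T}$ is exactly the matrix form of the rank-one sum $A[\alpha,\cF_m]=\sum_{i=1}^r \lambda_i[u_i]_{\cO_\alpha}[u_i]_{\sB_0}^T$ that the paper writes down, with the same conclusion drawn from the two linear-independence hypotheses. The only difference is that you spell out the details the paper leaves implicit (the entrywise identity, $|\sB_0|=r$, and the nonvanishing of the $\lambda_i$), which makes the argument more complete but not different in substance.
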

\begin{proof}
 The matrix $A[\alpha,\cF_m]$ can be written as
\[
    A[\alpha,\cF_m] = \sum_{i=1}^r \lambda_i[u_i]_{\cO_\alpha} [u_i]_{\sB_0}^T.
\]
Therefore,  $A[\alpha, \cF_m]$ has full column rank.
\end{proof}

\begin{remark}
A successful construction of $\mathscr{B}_0$ requires that $\binom{k}{p}\ge r$. 
The vectors $\{[u_i]_{\sB_0}\}_{i=1}^r$ and $\{[u_i]_{\cO_\alpha}\}_{i=1}^r$
have dimensions $r$ and $\binom{n-k-1}{m-p-1}$ respectively. Thus, when
\[
\binom{k}{p}\ge r \quad \text{and}  \quad  \binom{n-k-1}{m-p-1}\ge r,
\]
the vectors $\{[u_i]_{\sB_0}\}_{i=1}^r$ and $\{[u_i]_{\cO_\alpha}\}_{i=1}^r$
are both linearly independent for generic vectors $u_1,\ldots,u_r$
in real or complex field.
\end{remark}
Under the condition of Lemma \ref{lemma: lid A}, we can prove there exists a unique generating matrix $G$ for $\cF_m$.

\begin{theorem} \label{thm: G}
    Let $\cF_m$ be the tensor in \eqref{F-decomp}. Suppose that conditions of Lemma \ref{lemma: lid A} hold, then there exists a unique generating matrix $G$ for the tensor $\cF_m$.
\end{theorem}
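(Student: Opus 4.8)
The plan is to separate the claim into existence and uniqueness, treating $G$ one column at a time. Recall that $G$ is a generating matrix precisely when, for every $\alpha = e_{j_1}+\cdots+e_{j_{p+1}} \in \mathscr{B}_1$, the polynomial $\varphi_{j_1\cdots j_{p+1}}[G]$ is a generating polynomial for $\cF_m$, and that a consequence of this is the linear system \eqref{solve-G}, i.e. $A[\alpha,\cF_m]\,G(:,\alpha) = b[\alpha,\cF_m]$. Lemma \ref{lemma: lid A} gives that $A[\alpha,\cF_m]$ has full column rank under the stated hypotheses, so \eqref{solve-G} admits at most one solution in each column $\alpha$. Consequently uniqueness is immediate once existence is known: any two generating matrices satisfy the same full-column-rank system columnwise, hence coincide.

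For existence I would build a candidate $G$ directly from the decomposition \eqref{F-decomp}. Let $U_0$ be the $r \times r$ matrix whose $i$th row is $[u_i]_{\sB_0}^T$; the hypothesis that $\{[u_i]_{\sB_0}\}_{i=1}^r$ is linearly independent makes $U_0$ invertible, so for each $\alpha \in \mathscr{B}_1$ there is a unique column $G(:,\alpha)$ with $[u_i]_{\sB_0}^T G(:,\alpha) = u_i^\alpha$ for all $i \in [r]$. I claim this $G$ is a generating matrix. Evaluating the defining polynomial at a node yields $\varphi_{j_1\cdots j_{p+1}}[G](u_i) = [u_i]_{\sB_0}^T G(:,\alpha) - u_i^\alpha = 0$, so $\varphi_{j_1\cdots j_{p+1}}[G]$ vanishes at every $u_i$. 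Using the identity $\langle g,\cF_m\rangle = \sum_{i=1}^r \lambda_i\, g(u_i)$ (valid whenever $\deg g \le m$, with $x_0 = 1$), I obtain for each $\beta \in \mathbb{N}_{m-p-1}^n$
\[
\langle \varphi_{j_1\cdots j_{p+1}}[G]\cdot x^\beta,\ \cF_m\rangle = \sum_{i=1}^r \lambda_i\, \varphi_{j_1\cdots j_{p+1}}[G](u_i)\, u_i^\beta = 0,
\]
which is exactly the generating-polynomial condition \eqref{gene_poly}. Hence $\varphi_{j_1\cdots j_{p+1}}[G]$ is a generating polynomial for every $\alpha$, so $G$ is a generating matrix; combined with the uniqueness step, it is the unique one.

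The only genuine content is the bilinear-product identity $\langle g,\cF_m\rangle = \sum_i \lambda_i g(u_i)$, which converts the generating-polynomial condition into the vanishing of $\varphi_{j_1\cdots j_{p+1}}[G]$ at the nodes $u_i$; once that translation is in place, existence reduces to invertibility of $U_0$ and uniqueness to the rank statement of Lemma \ref{lemma: lid A}. The remaining checks are routine: that $\deg(\varphi_{j_1\cdots j_{p+1}}[G]\cdot x^\beta)\le m$ so the identity applies, and that the constructed $G$ indeed solves \eqref{solve-G} (this holds termwise, since $A[\alpha,\cF_m] = \sum_i \lambda_i [u_i]_{\cO_\alpha}[u_i]_{\sB_0}^T$ and $b[\alpha,\cF_m] = \sum_i \lambda_i u_i^\alpha [u_i]_{\cO_\alpha}$, so enforcing $[u_i]_{\sB_0}^T G(:,\alpha) = u_i^\alpha$ forces $A[\alpha,\cF_m]\,G(:,\alpha) = b[\alpha,\cF_m]$). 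I do not anticipate a serious obstacle, as the genericity hypotheses were tailored to deliver exactly the two invertibility/rank facts the argument needs.
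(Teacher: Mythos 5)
Your proof is correct and follows essentially the same route as the paper: the paper's existence construction $N_j = ([u_1]_{\sB_0},\ldots,[u_r]_{\sB_0})\,\diag(d_j)\,([u_1]_{\sB_0},\ldots,[u_r]_{\sB_0})^{-1}$ imposes exactly your columnwise interpolation conditions $[u_i]_{\sB_0}^T G(:,\alpha) = u_i^\alpha$, and the verification via $\langle g,\cF_m\rangle = \sum_i \lambda_i g(u_i)$ together with the uniqueness argument from the full column rank of $A[\alpha,\cF_m]$ (Lemma \ref{lemma: lid A}) match the paper's proof step for step.
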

\begin{proof}
    We first prove the existence of $G$. For $1\le i \le r$, $k+1 \le j \le n$, we denote
    \[
        d_j = ((u_1)_{j},\ldots,(u_r)_j).
    \]
    Under the assumption of Lemma \ref{lemma: lid A}, we can define
    \[
        N_j =  ([u_1]_{\sB_0},\ldots,[u_r]_{\sB_0}) \diag(d_j) ([u_1]_{\sB_0},\ldots,[u_r]_{\sB_0})^{-1}.
    \]  The matrix $G$ is constructed as 
    \[
        G(\beta, \nu+e_j) = N_j(G)_{\nu,\beta} 
    \]
    for $j=k+1,\ldots, n$ and $\nu,\beta\in\mathscr{B}_0$. For every $\alpha=\nu+e_j \in \mathscr{B}_1$, it holds that 
    \[
        \sum_{\beta \in \sB_0} G(\beta,\alpha) u_i^{\beta} - u_i^{\alpha} = N_j(G)_{\nu,:}[u_i]_{\sB_0}-(u_i)_j u_i^{\nu}=0
    \]
    Thus, for every $\gamma \in \N^{n}_{m-p-1}$, it holds that
    \[
        \langle x^{\gamma} \varphi_\alpha[G](x), \cF_m\rangle = \sum_{i=1}^r \lambda_i u_i^\gamma\sum_{\theta\in \sB_0} (G(\theta,\alpha) u_i^{\theta} - u_i^{\alpha}) = 0.
    \]
    It proves that the matrix $G$ is a generating matrix for $\cF_m$.

    Next, we show the uniqueness. The matrix $A[\alpha,\cF_m]$ has full column rank by Lemma \ref{lemma: lid A}, so the generating matrix $G$ is uniquely determined by linear systems in \eqref{solve-G}. It proves the uniqueness of $G$.
\end{proof}

By Theorem \ref{thm: G} and Lemma \ref{lemma: lid A}, the generating matrix $G$ can be uniquely determined by solving the linear system \eqref{solve-G}.
Let $N_{k+1}(G),\ldots,N_n(G)\in\mathbb{C}^{r\times r}$ be the matrices given as $(\nu,\beta\in\mathscr{B}_0)$:
\begin{equation}\label{eq:N}
N_l(G)_{\nu,\beta} =  G(\beta,\ \nu+e_l)\quad \text{for }l=k+1,\ldots, n .
\end{equation}
Then we have
\begin{equation*}
N_l(G)[v_i]_{\mathscr{B}_0}=(w_i)_{l-k}	[v_i]_{\mathscr{B}_0}\quad \text{for }l=k+1,\ldots, n .
\end{equation*}
for the vectors ($i=1,\ldots,r$)
\begin{equation*}
v_i\coloneqq ((v_i)_1,\ldots,(v_i)_k) =(u_i)_{1:k},\, w_i\coloneqq((w_i)_1,\ldots,(w_i)_{n-k}) =(u_i)_{k+1:n}.	
\end{equation*}
We select a generic vector $\xi\coloneqq(\xi_{k+1},\ldots,\xi_n)$ and let
\begin{equation}\label{N_xi}
N(\xi) \, \coloneqq  \, \xi_{k+1}N_{k+1}+\cdots+\xi_{n}N_{n}.
\end{equation}
Let $\tilde{v}_1,\ldots,\tilde{v}_r$ be unit length eigenvectors of $N(\xi)$, which are also common eigenvectors of $N_{k+1}(G),\ldots,N_{n}(G)$. For each $i=1,\ldots,r$, let $\tilde{w}_i$ be the vector such that its $j$th entry
$(\tilde{w}_i)_j$ is the eigenvalue of $N_{k+j}(G)$,
associated to the eigenvector $\tilde{v}_i$. Equivalently,
\begin{equation}\label{w_i}
    \tilde{w}_i\coloneqq(\tilde{v}_i^H N_{k+1}(G)\tilde{v}_i,\cdots,
    \tilde{v}_i^H N_n(G)\tilde{v}_i)\quad i=1,\ldots,r.
\end{equation}
Up to a permutation of $(\tilde{v}_1,\ldots, \tilde{v}_r)$, we have
\begin{equation*}
w_i=\tilde{w}_i.	
\end{equation*}




We denote the sets
\begin{equation}\label{label-set-J}
\boxed{\begin{array}{l}
J_1 \coloneqq \{x_{i_1}\cdots x_{i_p}:1\leq i_1<\cdots<i_p\leq k\},\\
J_1^{-j}  \coloneqq J_1 \cap \{x_{i_1}\cdots x_{i_p}: i_1,\ldots,i_p \neq j\},\\
J_2 \coloneqq \{(x_{i_1}\cdots x_{i_{m-p-1}}:k+1\leq i_1<\cdots<i_{m-p-1}\leq n\},\\
J_3 \coloneqq \{x_{i_1-k}\cdots x_{i_{m-p-1}-k}:(i_1,\ldots,i_{m-p-1})\in J_2\}.\\
\end{array}}
\end{equation}
The tensors $\lambda_1v_1^{\otimes p},\ldots,\lambda_rv_r^{\otimes p}$ satisfy the linear equation
\begin{equation*}
\sum_{i=1}^r \lambda_i v_i^{\otimes p} \otimes \tilde{w}_i^{\otimes (m-p-1)}=(\cF_m)_{0,[1:k]^{p},[k+1:n]^{(m-p-1)}}.
\end{equation*}
Thus, $\lambda_1[v_1]_{J_1},\ldots,\lambda_r[v_r]_{J_1}$ can be obtained by the linear equation
\begin{equation}\label{coef-1}
    \min\limits_{(\gamma_1,\ldots,\gamma_r)} \left\|(\cF_{m})_{J_1\cdot J_2}-
     \sum\limits_{i=1}^r\gamma_i \otimes [\tilde{w}_i]_{J_{3}} \right\|^2.
\end{equation}
We denote the minimizer of \reff{coef-1} by $(\tilde{\gamma}_1,\ldots,\tilde{\gamma}_r)$.

The vectors $v_1,\ldots,v_r$ satisfy the linear equation
\begin{equation*}
\sum_{i=1}^r v_i\otimes\lambda_iv_i^{\otimes p}\otimes\tilde{w}_i^{\otimes (m-p-1)}=(\cF_m)_{[1:k]^{p+1}\times[k+1:n]^{(m-p-1)}}.
\end{equation*}
For each $j\in[k]$, we solve the linear least square problem
\begin{equation}\label{coef-2}
    \min\limits_{(v_1,\ldots,v_r)} \left\|(\cF_{m})_{x_j\cdot J_1^{-j}\cdot J_2}-\sum\limits_{i=1}^r(v_i)_j\cdot\tilde{\gamma}_i\otimes[\tilde{w}_i]_{J_{3}} \right\|^2.
\end{equation}
We denote the minimizer of \reff{coef-2} as $(\tilde{v}_1,\ldots,\tilde{v}_r)$.



The scalars $\lambda_1, \ldots,\lambda_r$ in \eqref{F-decomp} satisfy the linear equation
\begin{equation}
    \lambda_1\begin{bmatrix}
        1\\\tilde{u}_1
    \end{bmatrix}^{\otimes m}+\cdots+\lambda_r\begin{bmatrix}
        1\\\tilde{u}_r
    \end{bmatrix}^{\otimes m}=\cF_m,
\end{equation}
where $\tilde{u}_i=(\tilde{v_i},\tilde{w}_i)$ for $i=1,\ldots,r$. They can be solved by the following linear least square problem
\begin{equation}\label{coef-lambda}
    \min\limits_{(\lambda_1,\ldots,\lambda_r)} \left\|(\cF)_{\Omega_m}-\sum\limits_{i=1}^r\lambda_i\cdot \left(\begin{bmatrix}
        1\\\tilde{u}_i
    \end{bmatrix}^{\otimes m} \right)_{\Omega_m}\right\|^2.
\end{equation}
Let $(\tilde{\lambda}_1,\ldots,\tilde{\lambda}_r)$ be the minimizer of \reff{coef-lambda}.

Concluding everything above, we obtain the decomposition of $\cF_m$
\begin{equation*}
    \cF_m=q_1^{\otimes m}+\cdots+q_r^{\otimes m},
\end{equation*}
where $q_i\coloneqq(\tilde{\lambda})^{1/m}(1,\tilde{v}_i,\tilde{w}_i)$,
for $i=1,\ldots,r$. All steps to obtain the decomposition are summarized in Algorithm \ref{algo:iSTD}




\begin{alg}\label{algo:iSTD}
(Incomplete symmetric tensor decompositions.)
\begin{itemize}

\item [Input:]
Rank $r$, dimension $d$, constant $p$ and subtensor $(\cF_m)_{\Omega_m}$ in \eqref{F-decomp}.

\item [Step~1.] Determine the matrix $G$ by solving \reff{solve-G}
for each $\alpha=e_{j_1}+\cdots+e_{j_{p+1}}\in\mathscr{B}_1$.

\item [Step~2.]  Let $N(\xi)$ be the matrix as in \reff{N_xi},
for a randomly selected vector $\xi$. Compute the vectors $\tilde{w}_i$ as in \reff{w_i}.

\item [Step~3.] Solve the linear least squares \reff{coef-1}, \reff{coef-2} and \reff{coef-lambda}
to get the scalars $\tilde{\lambda}_i$ and vectors $\tilde{v}_i$.



\item [Output:] The tensor decomposition
$\cF_m = q_1^{\otimes m}+\cdots+q_r^{\otimes m}$, for $q_i=(\tilde{\lambda})^{1/m}(1,\tilde{v}_i,\tilde{w}_i)$.

\end{itemize}
\end{alg}

\begin{theorem} \label{thm:TD}
    Let $\cF_m$ be the tensor in \eqref{F-decomp}. If $\cF_m$ satisfies conditions of Lemma \ref{lemma: lid A} and the matrix $N(\xi)$ in \eqref{N_xi} has distinct eigenvalues, then Algorithm~\ref{algo:iSTD} finds the unique rank-$r$ decomposition of $\cF$.
\end{theorem}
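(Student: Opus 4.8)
The plan is to verify that each step of Algorithm~\ref{algo:iSTD} produces exactly the quantities it is designed to recover, under the stated hypotheses, and then to argue uniqueness. First I would invoke Theorem~\ref{thm: G}: since $\cF_m$ satisfies the conditions of Lemma~\ref{lemma: lid A}, there is a unique generating matrix $G$, and it is recovered exactly by solving the linear systems \eqref{solve-G}, because the coefficient matrices $A[\alpha,\cF_m]$ have full column rank. This disposes of Step~1. For Step~2, the construction in the proof of Theorem~\ref{thm: G} shows that $N_l(G)[v_i]_{\mathscr{B}_0} = (w_i)_{l-k}[v_i]_{\mathscr{B}_0}$, so the matrices $N_{k+1}(G),\ldots,N_n(G)$ share the common eigenvectors $[v_1]_{\mathscr{B}_0},\ldots,[v_r]_{\mathscr{B}_0}$ with eigenvalues given by the coordinates of the $w_i$.

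The crucial point — and the step I expect to be the main obstacle — is arguing that the simultaneous eigenvector computation in Step~2 correctly separates the $r$ components. Here I would use the hypothesis that $N(\xi) = \xi_{k+1}N_{k+1}+\cdots+\xi_n N_n$ has distinct eigenvalues. Because $\{[v_i]_{\mathscr{B}_0}\}_{i=1}^r$ is a basis (they are linearly independent by Lemma~\ref{lemma: lid A}, and there are $r$ of them in an $r$-dimensional space since $|\mathscr{B}_0|=r$), the matrix $N(\xi)$ is diagonalizable with these as its only eigenvectors; distinctness of its eigenvalues then forces each eigenspace to be one-dimensional, so the unit eigenvectors $\tilde{v}_i$ recovered by the algorithm coincide with $[v_i]_{\mathscr{B}_0}$ up to scaling and a permutation of the index $i$. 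Since the $N_l(G)$ all commute with $N(\xi)$ and are simultaneously diagonalized by the same basis, the eigenvalue of $N_{k+j}(G)$ on $\tilde{v}_i$ is exactly $(w_i)_j$, which is what formula \eqref{w_i} extracts. Thus $\tilde{w}_i = w_i$ up to the permutation, as claimed in the text.

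Once the $\tilde{w}_i$ are pinned down as the true $w_i$, Step~3 is a sequence of overdetermined linear least squares problems \eqref{coef-1}, \eqref{coef-2}, \eqref{coef-lambda} whose exact solutions are inherited from the true decomposition \eqref{F-decomp}: the design matrices built from $[\tilde{w}_i]_{J_3}$ and the recovered $\tilde{v}_i$ have full column rank under the same genericity/linear-independence assumptions, so each least squares problem attains objective value zero at a unique minimizer, recovering $\lambda_i[v_i]_{J_1}$, then the $v_i$, and finally the $\lambda_i$. I would note that the residuals vanish precisely because the true parameters solve the corresponding linear systems, using the known entries $(\cF_m)_{\Omega_m}$ only. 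Assembling $q_i = (\tilde{\lambda}_i)^{1/m}(1,\tilde{v}_i,\tilde{w}_i)$ yields a rank-$r$ decomposition of $\cF_m$.

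Finally, for uniqueness of the rank-$r$ decomposition itself, I would observe that any rank-$r$ decomposition of $\cF_m$ must produce a generating matrix satisfying \eqref{solve-G}; since that system has a unique solution $G$, and the eigenstructure of $N(\xi)$ (with distinct eigenvalues) determines the $\tilde{v}_i,\tilde{w}_i$ uniquely up to permutation, the vectors $\mu_i$ and weights are determined up to reordering. Hence Algorithm~\ref{algo:iSTD} returns \emph{the} unique rank-$r$ decomposition, completing the proof.
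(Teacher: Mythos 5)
Your proposal is correct and follows essentially the same route as the paper's proof: uniqueness of the generating matrix $G$ via Theorem~\ref{thm: G}, recovery of the $\tilde{w}_i$ from the distinct-eigenvalue hypothesis on $N(\xi)$, unique solvability of the least squares problems \eqref{coef-1}, \eqref{coef-2}, \eqref{coef-lambda} from the linear independence of $\{[\tilde{w}_i]_{J_3}\}_{i=1}^r$, and uniqueness of the decomposition from the uniqueness of each step. You simply spell out more carefully than the paper does why the simultaneous diagonalization in Step~2 identifies the eigenvectors with $[v_i]_{\mathscr{B}_0}$ up to scaling and permutation, which is a welcome addition rather than a deviation.
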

\begin{proof}
    Under the assumptions of Lemma \ref{lemma: lid A}, the tensor $\cF_m$ has a unique generating matrix by Theorem \ref{thm: G} and the generating matrix $G$ is uniquely determined by solving \eqref{solve-G}. The matrix $N(\xi)$ in \eqref{N_xi} has distinct eigenvalues, so the vectors $\tilde{w}_i$ are determined by \eqref{w_i}. Lemma \ref{lemma: lid A} assumes $\{[u_i]_{\cO_\alpha}\}_{i=1}^r$ are linearly independent, it implies that $\{[\tilde{w}_i]_{J_{3}}\}_{i=1}^r$ are also linearly independent. Thus, the systems \eqref{coef-1} and \eqref{coef-2} both have unique solutions. By the uniqueness of every step in the Algorithm \ref{algo:iSTD}, we conclude that Algorithm \ref{algo:iSTD} finds the unique rank-$r$ decomposition of $\cF_m$.
\end{proof}

Algorithm \ref{algo:iSTD} requires the tensor $\cF_m$ to satisfy the condition of Lemma \ref{lemma: lid A}. Thus, the rank $r$ should satisfy
\[
    r \le \min \left \{ \binom{k}{p}, \binom{n-k-1}{m-p-1}  \right \}.
\]
In the following, we will find the largest rank that Algorithm \ref{algo:iSTD} can compute for the given order $m$.

\begin{lemma} \label{lemma:max r proof}
    If $n\ge \max \{2m-1,\frac{m^2}{4}-1\}$, then
    \begin{eqnarray} \label{eq:max r}
    &&\max \left( \binom{k^*}{p^*},  \binom{n-k^*-2}{m-p^*-1}  \right)\\
        &&=\max\limits_{p\in \mathbb{N} \cap [1,m-2] } \max\limits_{k\in \mathbb{N} \cap [p,n-m+p]}\left(\min\left(\binom{k}{p}, \binom{n-k-1}{m-p-1}\right)\right), \nonumber
    \end{eqnarray}
        where $p^*=\lfloor \frac{m-1}{2} \rfloor$ and $k^*$ is largest $k$ such that
    $\binom{k}{p^*}\le\binom{n-k-1}{m-p^*-1}$.
\end{lemma}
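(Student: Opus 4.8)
The plan is to show that the inner double maximum on the right-hand side is attained at $p=p^*$, so that the right-hand side collapses to the explicit quantity on the left. I would first fix $p$ and analyze the inner maximum over $k$. On the feasible range $p\le k\le n-m+p$, the map $k\mapsto\binom{k}{p}$ is nondecreasing while $k\mapsto\binom{n-k-1}{m-p-1}$ is nonincreasing; hence $k\mapsto\min(\binom{k}{p},\binom{n-k-1}{m-p-1})$ first rises and then falls, and its maximum occurs at the largest index $k$ with $\binom{k}{p}\le\binom{n-k-1}{m-p-1}$ (call it $k^*(p)$, so that $k^*(p^*)=k^*$) or at $k^*(p)+1$. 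Evaluating the min at these two indices gives
\[
f(p)\,:=\,\max_{p\le k\le n-m+p}\min\!\Big(\tbinom{k}{p},\,\tbinom{n-k-1}{m-p-1}\Big)=\max\!\Big(\tbinom{k^*(p)}{p},\,\tbinom{n-k^*(p)-2}{m-p-1}\Big).
\]
In particular the left-hand side of the claim is exactly $f(p^*)$, so the lemma is equivalent to asserting that $p^*$ maximizes $f$ over $\mathbb{N}\cap[1,m-2]$. The hypothesis $n\ge 2m-1$ is used here to guarantee that both $k^*(p)$ and $k^*(p)+1$ lie in the feasible range, making this reduction valid.

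Next I would exploit a symmetry. The involution $(p,k)\mapsto(m-1-p,\,n-1-k)$ sends $\binom{k}{p}$ to $\binom{n-k-1}{m-p-1}$ and conversely, hence leaves the min invariant; moreover it maps $\mathbb{N}\cap[1,m-2]$ to itself and sends the feasible interval $[p,n-m+p]$ exactly onto $[m-1-p,n-1-p]$, so feasibility is preserved. Therefore $f(p)=f(m-1-p)$, i.e.\ $f$ is symmetric about the center $\frac{m-1}{2}$ of the index interval. Since $p^*=\lfloor\frac{m-1}{2}\rfloor$ is the integer nearest this center, it then suffices to prove that $f$ is nondecreasing on $1\le p\le p^*$; combined with the symmetry this forces the maximum to occur at $p^*$ (and its mirror image).

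The crux, which I expect to be the main obstacle, is the monotonicity of $f$ on $[1,p^*]$. The plan is to compare $f(p)$ with $f(p+1)$ for $p\le p^*-1$ by relating $k^*(p)$ to $k^*(p+1)$ and tracking the two binomials through the one-step ratios $\binom{k}{p+1}/\binom{k}{p}=(k-p)/(p+1)$ and $\binom{n-k-1}{m-p-2}/\binom{n-k-1}{m-p-1}=(m-p-1)/(n-k-m+p+1)$. Raising $p$ by one enlarges the first binomial (the one that is binding at the crossover, since $\binom{k^*(p)}{p}\le\binom{n-k^*(p)-1}{m-p-1}$) while shrinking the second; near the balanced crossover $k^*(p)\approx\frac{n-1}{2}$ one shows the balanced value cannot decrease as $p$ moves toward $p^*$. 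This is precisely where the second hypothesis $n\ge\frac{m^2}{4}-1$ enters: it ensures the crossover index stays far enough from the endpoints $p$ and $n-m+p$ (equivalently, that $k^*(p)-p$ and $n-m+p-k^*(p)$ dominate the unit index shift) so that the signs of these one-step ratios are controlled uniformly for $1\le p\le p^*$. I would finish by separating the parities of $m$: for odd $m$ the center $\frac{m-1}{2}$ is an integer and the maximum sits exactly at $p^*=\frac{m-1}{2}$, while for even $m$ the center is a half-integer and the two adjacent values $f(p^*)$ and $f(p^*+1)$ coincide by the symmetry, so either realizes the maximum.
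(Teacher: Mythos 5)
Your skeleton coincides with the paper's: reduce the inner maximum over $k$ to the two candidates $k^*(p)$ and $k^*(p)+1$, so that $f(p)=\max\bigl(\binom{k^*(p)}{p},\binom{n-k^*(p)-2}{m-p-1}\bigr)$; use the involution $(p,k)\mapsto(m-1-p,\,n-1-k)$ to restrict attention to $p\le p^*$; then show $f$ is nondecreasing on $[1,p^*]$. The first two steps are correct as you state them. But the third step --- which you yourself call the crux --- is never proved: it is only a plan (``near the balanced crossover \ldots one shows the balanced value cannot decrease''), and that is where essentially all of the lemma's content lies. In the paper this step is the entire body of the proof. One first shows $k_p\ge\lfloor\frac{n-1}{2}\rfloor\ge\frac{n-2}{2}$, and this uses $n\ge 2m-1$, not $n\ge\frac{m^2}{4}-1$; your attribution of the roles of the two hypotheses is reversed, since you claim the $\frac{m^2}{4}-1$ condition is what keeps the crossover away from the endpoints. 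Then one compares $r_{p-1}$ with $r_p$ by cases: if $k_{p-1}<k_p$ a short chain of inequalities suffices, but if $k_{p-1}\ge k_p$ the shift $C=k_{p-1}-k_p\ge 0$ can be arbitrarily large, so one must bound \emph{products} $\prod_{i=1}^{C}$ of one-step ratios uniformly in $C$, not just single ratios; and one must further split according to which binomial attains $r_p$ (the paper's Case 1, $\binom{k_p}{p}>\binom{n-k_p-2}{m-p-1}$, versus Case 2), since the two cases need different estimates. The hypothesis $n\ge\frac{m^2}{4}-1$ enters only in closing concrete inequalities such as $\frac{k_p'}{p'+1}\cdot\frac{p}{k_p-p+1}\le 1$ in Case 1, and in Case 2 a quadratic inequality $4n^2-\beta n+\gamma\ge 0$ with $\beta=m^2-2m-8$, $\gamma=m^3-4m^2-4m+16$. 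Nothing in your sketch produces these bounds or any substitute for them.

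Concretely, the gap is this: from the one-step ratios $\binom{k}{p+1}/\binom{k}{p}=(k-p)/(p+1)$ and $\binom{n-k-1}{m-p-2}/\binom{n-k-1}{m-p-1}=(m-p-1)/(n-k-m+p+1)$ you cannot conclude $f(p+1)\ge f(p)$ without (i) controlling how far the crossover index moves when $p$ changes, and (ii) verifying that the accumulated ratio products stay at most $1$ over that whole shift; both require the explicit case analysis and the two hypotheses deployed in the specific places indicated above. Your parity remark at the end (odd versus even $m$) is correct but peripheral --- it follows from the same symmetry in the paper. So the proposal identifies the right route but stops short of the lemma's actual mathematical content; as written it is an outline, not a proof.
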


\begin{proof}
For a fixed $p\in [1,\frac{m-1}{2}]\cap \mathbb{N}$, it holds that $\binom{k}{p}$ is increasing in $k$ and $\binom{n-k-1}{m-p-1}$ is decreasing in $k$. For the fixed $p$, let $k_p$ be the largest $k$ such that
\[
    \binom{k}{p} \le \binom{n-k-1}{m-p-1}.
\]
It holds that
\[
   r_p:= \max\limits_{k\in \mathbb{N} \cap [p,n-m+p]}\left(\min\left(\binom{k}{p}, \binom{n-k-1}{m-p-1}\right)\right) = \max \left( \binom{k_p}{p},  \binom{n-k_p-2}{m-p-1}  \right).
\]
For $p\in(\frac{m-1}{2},m-2]\cap \mathbb{N}$ and $k\in [p,n-m+p]$, let $p'=m-p-1$ and $k'=n-k-1$. We can verify that $p'\in [1,\frac{m-1}{2}]$, $k'\in [p',n-m+p']$, and
\[
    \min\left(\binom{k}{p}, \binom{n-k-1}{m-p-1}\right) = \min\left(\binom{k'}{p'}, \binom{n-k'-1}{m-p'-1}\right).
\]
Therefore, it holds that $\max_{p \in \mathbb{N} \cap [1,m-2]}r_p=\max_{p \in \mathbb{N} \cap [1,p^* ]}r_p$. Next, we will prove $\max_p r_p = r_{p*}$ by showing $r_p\ge r_{p-1}$ for $ p \in \mathbb{N} \cap [2,p^* ]$.

When $p\le \frac{m-1}{2}$ and $n\ge 2m-1$, we have $p\le m-p-1\le n-1-\lfloor \frac{n-1}{2}\rfloor -p$. Hence,
\[
    \binom{\lfloor \frac{n-1}{2} \rfloor}{p} \le \binom{ n-1-\lfloor\frac{n-1}{2} \rfloor}{p}= \binom{ n-1-\lfloor\frac{n-1}{2} \rfloor}{n-1-\lfloor\frac{n-1}{2}\rfloor-p}\le \binom{ n-1-\lfloor\frac{n-1}{2} \rfloor}{m-p-1}.
\]
The above equation implies $k_p\ge \lfloor \frac{n-1}{2} \rfloor \ge \frac{n-2}{2}$.

If $k_{p-1}< k_p$, then it holds that
\[
    r_{p-1}\le\binom{k_{p-1}+1}{p-1}\le\binom{k_p}{p-1}= \binom{k_p}{p} \frac{p}{k_p-p+1}\le \binom{k_p}{p} \frac{m-1}{n-m} \le \binom{k_p}{p} \le r_p.
\]
In the following proof, we show $r_{p-1}\le r_p$ if $k_{p-1}\ge k_p$.

\textbf{Case 1:} $\binom{k_p}{p}>\binom{n-k_p-2}{m-p-1}$. In this case, $r_p=\binom{k_p}{p}$. It holds that
\begin{eqnarray*}
    \binom{k_p'-C}{p'+1}&=& \binom{k_p'}{p'} \frac{k_p'-p'}{p'+1}\Pi_{i=1}^C \frac{k_p'-i-p'}{k_p'-i+1},
\end{eqnarray*}
\begin{eqnarray*}
    \binom{k_p+C}{p-1}&=&\binom{k_p}{p}\frac{p}{k_p-p+1}\Pi_{i=1}^C \frac{k_p+i}{k_p-p+1+i},
\end{eqnarray*}
for $C\ge 0, p'=m-p-1,k_p'=n-k_p-1$. By direct computation, we have
\[
    \frac{k_p'-i-p'}{k_p'-i+1} \frac{k_p+i}{k_p-p+1+i} \le 1 \Leftrightarrow k_pm-np+n-k_p+im-i\ge 0.
\]
The inequalities $n-m+p\ge k_p\ge \frac{n-2}{2},p\le \frac{m-1}{2},n\ge 2m-1,i\ge 0$ imply
\[
    k_pm-np +n-k_p+im-i \ge n-m + i(m-1)+1 \ge 0.
\]
It proves 
\begin{equation} \label{eq:ineq}
\frac{k_p'-i-p'}{k_p'-i+1} \frac{k_p+i}{k_p-p+1+i} \le 1,\, \text{for $i\ge 0$. }
\end{equation}

If $p=\frac{m-1}{2}$, then $p=m-p-1=p'$ and $k_p=\lfloor \frac{n-1}{2} \rfloor$. If $n$ is even, then $k_p=\frac{n-2}{2}$ and $\binom{k_p}{p}=\binom{n-k_p-2}{m-p-1}$, which does not satisfy the assumption of Case 1. If $n$ is odd, then $k_p=\frac{n-1}{2}$ and $k_p=k_p'$. Thus, we have $\binom{k_p'}{p'}=\binom{k_p}{p}$ and $\frac{k_p'-p'}{p'+1}\frac{p}{k_p-p+1}=\frac{k_p-p}{p+1}\frac{p}{k_p-p+1}<1$. These inequalities and \eqref{eq:ineq} imply that 
\[
    \binom{k_p+C}{p-1}\binom{k_p'-C}{p'+1}<\binom{k_p}{p}^2 \Rightarrow \min \left(  \binom{k_p+C}{p-1},\binom{k_p'-C}{p'+1}\right) < r_p.
\]

Then, we consider $p\le \frac{m-2}{2}$. Under the assumption of Case 1, we have 
\begin{eqnarray*}
    \binom{k_p'-C}{p'+1}&=& \binom{k_p'}{p'} \frac{k_p'-p'}{p'+1}\Pi_{i=1}^C \frac{k_p'-i-p'}{k_p'-i+1} \\
    &=&\binom{k_p'-1}{p'} \frac{k_p'}{k_p'-p'} \frac{k_p'-p'}{p'+1}\Pi_{i=1}^C \frac{k_p'-i-p'}{k_p'-i+1} \\
    &<& \binom{k_p}{p} \frac{k_p'}{p'+1} \Pi_{i=1}^C \frac{k_p'-i-p'}{k_p'-i+1}.
\end{eqnarray*}
It holds that $\frac{k_p'}{p'+1}\frac{p}{k_p-p+1}\le 1$ if and only if $k_pm+m+(p-m-n)p\ge 0$. We observe that $k_pm+m+(p-m-n)p$ is increasing in $k_p$ and decreasing in $p$. When $p\le \frac{m-2}{2}$, we have 
\[
    k_pm+m+(p-m-n)p \ge \frac{n-2}{2}m+m+(\frac{m-2}{2}-m-n)\frac{m-2}{2}=n+1-\frac{m^2}{4}.
\]
As a result, $\frac{k_p'}{p'+1}\frac{p}{k_p-p+1}\le 1$ when $n\ge \frac{m^2}{4}-1$. This inequality and \eqref{eq:ineq} imply
\[
    \binom{k_p+C}{p-1}\binom{k_p'-C}{p'+1}<\binom{k_p}{p}^2 \Rightarrow \min \left(  \binom{k_p+C}{p-1},\binom{k_p'-C}{p'+1}\right) < r_p.
\]

For all choices of $p$, the above inequality holds. Under the assumption that $k_{p-1}\ge k_p$, we have $r_{p-1}= \min \left(  \binom{k_p+C}{p-1},\binom{k_p'-C}{p'+1}\right)$ for some $C\ge 0$.
It proves that $r_p>r_{p-1}$ when $n\ge \max \{2m-1,\frac{m^2}{4}-1\}$ under the assumption of Case 1.

\textbf{Case 2:} $\binom{k_p}{p}\le\binom{n-k_p-2}{m-p-1}$. In this case, $r_p=\binom{n-k_p-2}{m-p-1}$. Similar to Case 1, we can show
\[
    \binom{k_p+1+C}{p-1}<\binom{k_p'-1}{p'}\frac{k_p+1}{k_p+1-p}\frac{p}{k_p-p+2}\Pi_{i=1}^C \frac{k_p+1+i}{k_p-p+2+i}
\]
\[
    \binom{k_p'-1-C}{p'+1}<\binom{k_p'-1}{p'}\frac{k_p'-p'-1}{p'+1}\Pi_{i=1}^C \frac{k_p'-p'-1-i}{k_p'-i}
\]
and $ \frac{k_p+1+i}{k_p-p+2+i}\frac{k_p'-p'-1-i}{k_p'-i}\le 1$ for $i\ge -1$. We observe that $\frac{k_p+1}{k_p+1-p}\frac{p}{k_p-p+2}\frac{k_p'-p'-1}{p'+1}$ is decreasing in $k_p$ and increasing in $p$. Recall that $k\ge \frac{n-2}{2},p\le \frac{m-1}{2}$, so we can show
\begin{equation}\label{eq:kp case2}
     \frac{k_p+1}{k_p+1-p}\frac{p}{k_p-p+2}\frac{k_p'-p'-1}{p'+1} \le 1 \Leftarrow 4n^2-\beta n+\gamma\ge 0,
\end{equation}
where $\beta=m^2-2m-8$ and $\gamma=m^3-4m^2-4m+16$.
The inequality on the right of \eqref{eq:kp case2} is quadratic in $n$, so it holds when $\beta^2-16\gamma \le 0$ or
$
    n \ge \frac{\beta+\sqrt{\beta^2-16\gamma}}{8}.
$
The assumption $n\ge \max \{2m-1,\frac{m^2}{4}-1\}$ implies that
\[
    n \ge \frac{m^2}{4}-1 \ge \frac{\beta}{4}\ge \frac{\beta+\sqrt{\beta^2-16\gamma}}{8}.
\]
Therefore, the inequality \eqref{eq:kp case2} holds. Similar to Case 1, it concludes the proof of $r_p>r_{p-1}$ for Case 2.

Summarizing everything above, we prove that \eqref{eq:max r} holds for $n \ge \max\{2m-1,\frac{m^2}{4}-1\}$.
\end{proof}


	
	

The following Theorem \ref{largest-r} provides the largest rank that Algorithm \ref{algo:iSTD} can compute based on the result of Lemma \ref{lemma: lid A}.

\begin{theorem}\label{largest-r}
Let $\cF_m\in S^{m}(\mathbb{C}^{n+1})$ be the tensor as in \eqref{F-decomp}. When $n\geq\max(2m-1,\frac{m^2}{4}-1)$, the largest rank $r$ of $\cF_m$ that Algorithm~\ref{algo:iSTD} can calculate is
\begin{equation}\label{rmax}
    r_{\text{max}}=\max(\binom{ k^*}{p^*}, \binom{n-2-k^*}{m-1-p^*}),
\end{equation}
where $p^*=\lfloor \frac{m-1}{2} \rfloor$ and $k^*$ is largest integer $k$ such that
    $\binom{k}{p^*}\le\binom{n-k-1}{m-p^*-1}$.
\end{theorem}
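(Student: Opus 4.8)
The plan is to reduce the statement to the purely combinatorial optimization already settled in Lemma~\ref{lemma:max r proof}. First I would pin down exactly when Algorithm~\ref{algo:iSTD} succeeds. Combining Lemma~\ref{lemma: lid A}, Theorem~\ref{thm: G}, and Theorem~\ref{thm:TD} with the genericity remark following Lemma~\ref{lemma: lid A}, the algorithm recovers the unique rank-$r$ decomposition of a generic $\cF_m$ precisely when one can choose integers $p\in[1,m-2]$ and $k\in[p,n-m+p]$ (the upper bound on $k$ being what keeps $\mathcal{O}_\alpha$ nonempty) so that both $\binom{k}{p}\ge r$ and $\binom{n-k-1}{m-p-1}\ge r$ hold. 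The first inequality guarantees that $\mathscr{B}_0$ can be populated with $r$ monomials, and the second guarantees that the $\binom{n-k-1}{m-p-1}\times r$ matrix $A[\alpha,\cF_m]$ has enough rows to attain full column rank $r$, which it does generically by Lemma~\ref{lemma: lid A}. Equivalently, a given pair $(p,k)$ supports rank $r$ if and only if $r\le\min\!\left(\binom{k}{p},\binom{n-k-1}{m-p-1}\right)$.

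Next I would express the largest admissible rank as an explicit max--min. Since $(p,k)$ may be chosen freely within the feasible region, the largest $r$ for which some valid pair exists is
\[
r_{\max}=\max_{p\in\mathbb{N}\cap[1,m-2]}\;\max_{k\in\mathbb{N}\cap[p,n-m+p]}\min\!\left(\binom{k}{p},\binom{n-k-1}{m-p-1}\right).
\]
This captures both directions of ``largest'': at $r=r_{\max}$ the optimal pair witnesses feasibility and the algorithm succeeds, while any $r>r_{\max}$ violates at least one of the two binomial bounds for every admissible $(p,k)$, so the required full-rank condition cannot hold.

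Finally I would invoke Lemma~\ref{lemma:max r proof} directly. Under the hypothesis $n\ge\max(2m-1,\frac{m^2}{4}-1)$, that lemma evaluates the above double max--min to $\max\!\left(\binom{k^*}{p^*},\binom{n-k^*-2}{m-p^*-1}\right)$ with $p^*=\lfloor\frac{m-1}{2}\rfloor$ and $k^*$ the largest $k$ satisfying $\binom{k}{p^*}\le\binom{n-k-1}{m-p^*-1}$. Rewriting $\binom{n-k^*-2}{m-p^*-1}$ as $\binom{n-2-k^*}{m-1-p^*}$ gives exactly formula~\eqref{rmax}, completing the argument.

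I expect the only real obstacle to be the bookkeeping in the first step: making the feasibility condition of the algorithm match the max--min expression precisely, in particular using the correct $k$-range $[p,n-m+p]$ and verifying that the distinct-eigenvalue and linear-independence assumptions of Theorem~\ref{thm:TD} are automatically satisfiable on a generic set once the two binomial inequalities hold. The heavy combinatorial work---locating the optimum at $p=p^*,\,k=k^*$---is entirely contained in Lemma~\ref{lemma:max r proof}, so no new estimates are needed in this proof.
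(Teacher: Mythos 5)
Your proposal is correct and takes essentially the same route as the paper's own proof: both use Theorem~\ref{thm:TD} to characterize the computable ranks by the two conditions $\binom{k}{p}\ge r$ and $\binom{n-k-1}{m-p-1}\ge r$, express $r_{\max}$ as the max--min over admissible $(p,k)$, and then invoke Lemma~\ref{lemma:max r proof} to evaluate that optimum at $(p^*,k^*)$. Your additional bookkeeping on genericity and the exact $k$-range is a more careful elaboration of the same argument, not a different one.
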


\begin{proof}
By Theorem \ref{thm:TD}, Algorithm \ref{algo:iSTD} requires $\binom{k}{p}\geq r$ and $\binom{n-k-1}{m-p-1}\geq r$ to find a rank-$r$ decomposition of tensor $\cF_m$. For the given tensor $\cF_m$ with dimension $n+1$ and order $m$, the largest computable rank of Algorithm~\ref{algo:iSTD} is
\begin{equation}\label{max-r}
r_{\max}=\max\limits_{k,p}\left(\min\left(\binom{k}{p}, \binom{n-k-1}{m-p-1}\right)\right),
\end{equation}
where $p\in[1,\, m-2]$ and $k\in[p+1,\, n-m+p-1]$. Therefore, \eqref{rmax} is a direct result of Lemma \ref{lemma:max r proof}.
\end{proof}

\begin{remark}
    The $k^*$ in Theorem \ref{largest-r} can be obtained by solving
    \begin{equation}\label{eq:solve k}
		\binom{k}{p^*}=\binom{n-k-1}{m-1-p^*},
    \end{equation}
    where the above binomial coefficients are generalized to binomial series for real number $k$.
    Let $\tilde{k}\in \mathbb{R}$ be the solution to \eqref{eq:solve k}, then $k^*=\lfloor \tilde{k} \rfloor$. Especially, when $m$ is odd, we have $p^*=m-1-p^*=\frac{m-1}{2}$, $k^*=\lfloor \frac{n-1}{2} \rfloor$, and the corresponding largest rank is
    $$r_{max}=\binom{\lfloor \frac{n-1}{2} \rfloor}{\frac{m-1}{2}}.$$
    There is no uniform formula for the largest ranks when $m$ is even. The largest ranks for some small orders are summarized in Table \ref{tab:max ranks}.
\end{remark}

\begin{table}[h]
    \centering
    \begin{tabular}{c|c}
    \toprule
    $m$ & the largest $r$ \\
    \midrule
    $3$ & $\lfloor\frac{n-1}{2}\rfloor$ \\
    \midrule
    $4$ & $\lfloor\frac{2n-1-\sqrt{8n-7}}{2}\rfloor$ \\
    \midrule
    $5$ & $\binom{\lfloor\frac{n-1}{2}\rfloor}{2}$ \\
    \midrule
    \multirow{2}{*}{$6$} & $\max(\binom{\lfloor \tilde{k}\rfloor}{2}, \binom{n-\lfloor \tilde{k}\rfloor-2}{3})$, where $\Delta=\frac{9}{4}n^4-\frac{47}{2}n^3+\frac{353}{4}n^2-\frac{412}{3}n+\frac{1889}{27}$\\
     & and $\tilde{k}=\sqrt[3]{-\frac{3}{2}(n-3)(n-4)+\sqrt{\Delta}}+\sqrt[3]{-\frac{3}{2}(n-3)(n-4)-\sqrt{\Delta}}+n-3$\\
    \midrule
    $7$ & $\binom{\lfloor\frac{n-1}{2}\rfloor}{3}$ \\
    \bottomrule
    \end{tabular}
    \caption{The largest rank $r$ that Algorithm \ref{algo:iSTD} can compute.}
    \label{tab:max ranks}
\end{table}

\section{Incomplete Tensor Approximations and Error Analysis}
\label{sc:approximation}
When learning Gaussian mixture models, the subtensor $(\cF_m)_{\Omega_m}$ is estimated from samples and is not exactly given. In such case, Algorithm \ref{algo:iSTD} can still find a good low-rank approximation of $\cF_m$. In this section, we discuss how to obtain a good tensor approximation of $\cF_m$ and provide an error analysis for the approximation.

Let $\widehat{\cF}_m$ be approximations of $\cF_m$. Given the subtensor $(\widehat{\cF}_m)_{\Omega_m}$, we can find a low-rank approximation of $\cF_m$ following Algorithm \ref{algo:iSTD}. We define the matrix $A[\alpha, \widehat{\cF}_m]$ and the vector $b[\alpha, \widehat{\cF}_m]$ in the same way as in \reff{eq: Aalpha}, for each $\alpha\in\mathscr{B}_1$. Then we have the following linear least square problem
\begin{equation}\label{solve-G-ls}
    \min_{g_{\alpha} \in \bC^{\sB_0} } \quad
     \left\| A[\alpha,\widehat{\cF}_m] \cdot g_{\alpha}-b[\alpha,\widehat{\cF}_m]\right\|^2.
\end{equation}
For each $\alpha\in\sB_1$, we solve \eqref{solve-G-ls} to get $\widehat{G}[:,\alpha]$ which is an approximation of $G[:,\alpha]$. Combining all $\widehat{G}[:,\alpha]$'s, we get $\widehat{G}\in\mathbb{C}^{\sB_0\times\sB_1}$ approximating the generating matrix $G$. Similar to \eqref{eq:N}, for $l=k+1,\ldots,n$, we define $N_l(\widehat{G})$ as an approximation of $N_l(G)$ and let
\begin{equation}\label{xi-est}
\widehat{N}(\xi)\coloneqq\xi_{k+1}N_{k+1}(\widehat{G})+\cdots+\xi_{n}N_{n}(\widehat{G}),
\end{equation}
where $\xi=(\xi_{k+1},\ldots,\xi_{n})$ is a generic vector. Let $\hat{v}_1,\ldots,\hat{v}_r$ be the unit length eigenvectors of $\widehat{N}(\xi)$ and
\begin{equation}\label{w_hat}
    \hat{w}_i\coloneqq(\hat{v}_i^H N_{k+1}(\widehat{G})\hat{v}_i,\cdots,
    \hat{v}_i^H N_n(\widehat{G})\hat{v}_i)\quad i=1,\ldots,r.
\end{equation}
For the sets $J_1,J_1^{-j},J_2,J_3$ defined in \reff{label-set-J}, we solve the linear least square problem
\begin{equation}\label{coefest-1}
    \min\limits_{(\gamma_1,\ldots,\gamma_r)} \left\|(\widehat{\cF}_{m})_{J_1\cdot J_2}-
     \sum\limits_{i=1}^r\gamma_i \otimes [\hat{w}_i]_{J_{3}} \right\|^2.
\end{equation}
Let $(\hat{\gamma}_1,\ldots,\hat{\gamma}_r)$ be the minimizer of the above problem. Then, we consider the following linear least square problem
\begin{equation}\label{coefest-2}
    \min\limits_{(v_1,\ldots,v_r)} \left\|(\widehat{\cF}_{m})_{x_j \cdot J_1^{-j}\cdot J_2}-\sum\limits_{i=1}^r(v_i)_j\cdot\hat{\gamma}_i\otimes [\hat{w}_i]_{J_{3}} \right\|^2.
\end{equation}
We obtain $(\hat{v}_1,\ldots,\hat{v}_r)$ by solving the above problem for $j=1,\ldots,k$. Let $\hat{u}=(\hat{v},\hat{w})$. Then, we have the following linear least square problem
\begin{equation}\label{coefest-lambda}
    \min\limits_{(\lambda_1,\ldots,\lambda_r)} \left\|(\widehat{\cF}_m)_{\Omega_m}-\sum\limits_{i=1}^r\lambda_i\cdot \left(\begin{bmatrix}
        1\\\hat{u}_r
    \end{bmatrix}^{\otimes m} \right)_{\Omega_m}\right\|^2.
\end{equation}
Denote the minimizer of \reff{coefest-lambda} as $(\hat{\lambda}_1,\ldots,\hat{\lambda}_r)$.
For $i=1,\ldots,r$, let
\begin{equation*}
\hat{q}_i\coloneqq(\hat{\lambda}_i)^{1/m}(1,\hat{v}_i,\hat{w}_i).
\end{equation*}
Now, we obtain the approximation of the tensor $\cF_m$
\begin{equation*}
    \cF_m\approx(\hat{q}_1)^{\otimes m}+\cdots+(\hat{q}_r)^{\otimes m}.
\end{equation*}
This result may not be optimal due to sample errors. We can get a more accurate approximation by using $(\hat{q}_1,\ldots,\hat{q}_r)$ as starting points to solve the nonlinear optimization
\begin{equation} \label{solve-F}
\min\limits_{(q_1,\ldots,q_r)} \, \left\|
(\widehat{\cF}_m)_{\Omega_{m}}- \sum_{i=1}^r (q_i^{\otimes m})_{\Omega_{m}}\right\|^2.
\end{equation}
We denote the minimizer of the optimization \reff{solve-F} as $(q_1^*,\ldots,q_r^*)$.

We summarize the above calculations as a tensor approximation algorithm in Algorithm \ref{algo:tensor-approx}.

\begin{alg}  \label{algo:tensor-approx}
(Incomplete symmetric tensor approximation.)
\begin{itemize}

\item [Input:] The rank $r$, the dimension $d$, the constant $p$, and the subtensor $(\widehat{\cF}_m)_{\Omega_m}$ as in \eqref{subtensor-approx}.

\item [Step~1.] Determine the generating matrix $\widehat{G}$ by solving \reff{solve-G-ls} for each $\alpha\in\mathscr{B}_1$.

\item [Step~2.] Choose a generic vector $\xi$ and define $\widehat{N}(\xi)$ as in \eqref{xi-est}. Calculate unit length eigenvectors of $\widehat{N}(\xi)$ and corresponding eigenvalues of each $N_i(\widehat{G})$ to define $\hat{w}_i$ as in \eqref{w_hat}.

\item [Step~3.] Solve \eqref{coefest-1}, \eqref{coefest-2} and \eqref{coefest-lambda} to obtain the coefficients $\hat{\lambda}_i$ and vectors $\hat{v}_i$.

\item[Step~4.] Let $\hat{q}_i\coloneqq(\hat{\lambda}_i)^{1/m}(1,\hat{v}_i,\hat{w}_i)$ for $i=1,\ldots,r$. Use $\hat{q}_1,\ldots,\hat{q}_r$ as start points to solve the nonlinear optimization \eqref{solve-F} and get an optimizer $(q_1^*,\ldots,q_r^*)$.

\item [Output:] The incomplete tensor approximation $(q_1^*)^{\otimes m}+\cdots+(q_r^*)^{\otimes m}$ for $\widehat{\cF}_m$.

\end{itemize}
\end{alg}

We can show that Algorithm~\ref{algo:tensor-approx} provides a good rank-$r$ approximation when the input subtensor $(\widehat{\cF}_m)_{\Omega_m}$ is close to exact tensors $\cF_m$.

\begin{theorem}\label{thm:tensor}
Let $\cF_m = \omega_1(\mu_1)^{\otimes m}+\cdots+\omega_r(\mu_r)^{\otimes m}$ as in \eqref{eq:cFm} and constants $k,\,p$ be such that $\min\left(\binom{k}{p}, \binom{n-k-1}{m-p-1}\right)\geq r$. We assume the following conditions:
\begin{itemize}
\item [(i)] the scalars $\omega_i$ and the leading entry of each $\mu_i$ are nonzero;

\item [(ii)] the vectors $\{[(\mu_i)_{1:n}]_{\sB_0}\}_{i=1}^r$ are linearly independent;

\item [(iii)] the vectors $\{[(\mu_i)_{1:n}]_{\mathcal{O}_\alpha}\}_{i=1}^r$ are linearly independent for all $\alpha \in \sB_1$;

\item [(iv)] the eigenvalues of the matrix $N(\xi)$ in \reff{N_xi}
are distinct from each other.
\end{itemize}
Let $q_i=(\omega_i)^{1/m}\mu_i$ and $q_i^*$ be the output vectors of
Algorithm~\ref{algo:tensor-approx}.
If the distance $\epsilon := \|(\widehat{\cF}_m-\cF_m)_{\cI_m} \|$ is small enough, then there exist scalars $\tilde{\eta}_i, \eta_i^{*}$ such that
\[
(\hat{\eta}_i)^{m} = (\eta_i^{*})^{m}=1, \quad
\|\hat{\eta}_i\hat{q}_i- q_i\| = O(\epsilon), \quad
\|\eta_i^{*}{q}^{*}_i- q_i\| = O(\epsilon),
\]
up to a permutation of $(q_1, \ldots, q_r)$,
where the constants inside $O(\cdot)$ only depend on $\cF_m$ and the choice of $\xi$ in Algorithm~\ref{algo:tensor-approx}.
\end{theorem}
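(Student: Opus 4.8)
The plan is to follow Algorithm~\ref{algo:tensor-approx} line by line and show that each operation it performs is locally Lipschitz in the input subtensor, so that the $O(\epsilon)$ perturbation of $(\cF_m)_{\Omega_m}$ propagates, through the finitely many steps, to an $O(\epsilon)$ perturbation of every intermediate quantity and hence of $\hat{q}_i$ and $q_i^*$. The two workhorses are: (a) the perturbation theory of the Moore--Penrose pseudoinverse, which is locally Lipschitz at any matrix of full column rank (its smallest singular value being bounded away from $0$); and (b) the perturbation theory of a single eigenpair of a matrix with simple spectrum, where the eigenvalue and the (normalized) eigenvector depend analytically on the entries. Conditions (i)--(iv) are exactly what make the corresponding \emph{exact} objects nondegenerate: via Lemma~\ref{lemma: lid A}, Theorem~\ref{thm: G}, and Theorem~\ref{thm:TD}, they guarantee full column rank of the relevant matrices and distinct eigenvalues of $N(\xi)$, so that (a) and (b) apply with constants depending only on $\cF_m$ and $\xi$.

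First I would treat the generating matrix. By Lemma~\ref{lemma: lid A}, $A[\alpha,\cF_m]$ has full column rank for every $\alpha\in\sB_1$, and $A[\alpha,\widehat{\cF}_m]$, $b[\alpha,\widehat{\cF}_m]$ differ from their exact counterparts by $O(\epsilon)$ in norm; hence for $\epsilon$ small $A[\alpha,\widehat{\cF}_m]$ retains full column rank with a uniform lower bound on its smallest singular value, and the unique least-squares solution of \eqref{solve-G-ls} obeys $\widehat{G}[:,\alpha]=G[:,\alpha]+O(\epsilon)$. Reading off entries as in \eqref{eq:N} gives $N_l(\widehat{G})=N_l(G)+O(\epsilon)$, and therefore $\widehat{N}(\xi)=N(\xi)+O(\epsilon)$. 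This next eigen-step is where I expect the main obstacle to lie: since $N(\xi)$ has distinct eigenvalues by (iv), for small $\epsilon$ so does $\widehat{N}(\xi)$, and each unit eigenvector $\hat{v}_i$ matches, after a permutation and a choice of unit phase, the normalized exact eigenvector proportional to $[v_i]_{\sB_0}$, with error $O(\epsilon)$; the Rayleigh quotients in \eqref{w_hat} then give $\hat{w}_i=w_i+O(\epsilon)$ under the same permutation. The delicacy here is purely bookkeeping: eigenvectors are defined only up to scaling and their labelling only up to permutation, so the $O(\epsilon)$ estimate must be stated relative to a correctly fixed phase and index matching, and the distinct-eigenvalue hypothesis is indispensable for the eigenvector map to be well-conditioned.

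With $\hat{w}_i=w_i+O(\epsilon)$ in hand, condition (iii) yields that $\{[w_i]_{J_3}\}_{i=1}^r$ are linearly independent, hence $\{[\hat{w}_i]_{J_3}\}_{i=1}^r$ remain linearly independent and the coefficient matrices of the least-squares problems \eqref{coefest-1}, \eqref{coefest-2}, \eqref{coefest-lambda} keep full column rank with uniformly bounded pseudoinverses. Applying the pseudoinverse perturbation bound once more gives $\hat{\gamma}_i=\tilde{\gamma}_i+O(\epsilon)$, the recovered coordinate vectors $\hat{v}_i=v_i+O(\epsilon)$, and $\hat{\lambda}_i=\lambda_i+O(\epsilon)$, all under the fixed permutation. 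Assembling $\hat{q}_i=(\hat{\lambda}_i)^{1/m}(1,\hat{v}_i,\hat{w}_i)$ and comparing with $q_i=(\omega_i)^{1/m}\mu_i=\lambda_i^{1/m}(1,v_i,w_i)$, the only residual ambiguity is the branch of the $m$th root of $\hat{\lambda}_i$; choosing the branch closest to $\lambda_i^{1/m}$ and absorbing the discrepancy into a root of unity $\hat{\eta}_i$ with $\hat{\eta}_i^m=1$ gives $\|\hat{\eta}_i\hat{q}_i-q_i\|=O(\epsilon)$.

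Finally, for the refined vectors I would argue via a local error bound. The objective in \eqref{solve-F} evaluated at $\hat{q}_i$ is at most $\|(\widehat{\cF}_m-\cF_m)_{\Omega_m}\|+O(\epsilon)=O(\epsilon)$ in norm, using that $\hat q_i$ is $O(\epsilon)$-close to $q_i$ and that the map $(q_i)\mapsto(\sum_i q_i^{\otimes m})_{\Omega_m}$ is locally Lipschitz; since $(q_1^*,\ldots,q_r^*)$ minimizes, its residual is also $O(\epsilon)$. Conditions (i)--(iv) make this map have injective differential at the true decomposition (equivalently, the decomposition is locally unique with full-rank Jacobian), which furnishes a local error bound converting the small residual into a bound $\|\eta_i^* q_i^*-q_i\|=O(\epsilon)$ on the parameters, with $(\eta_i^*)^m=1$, up to the same phase-and-permutation ambiguity. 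Chaining the $O(\epsilon)$ estimates through the finitely many steps, with all implicit constants depending only on $\cF_m$ and on $\xi$ through the singular-value bounds and the eigenvalue gaps, completes the proof.
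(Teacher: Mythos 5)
Your treatment of Steps 1--3 of Algorithm~\ref{algo:tensor-approx} coincides with the paper's own proof: least-squares perturbation for $\widehat G$ (the paper cites \cite[Theorem~3.4]{Demmel}), eigenpair perturbation of $\widehat N(\xi)$ under the simple-spectrum hypothesis (iv) (the paper cites \cite{chatelin2012}), preservation of the linear independence of $\{[\hat w_i]_{J_3}\}_{i=1}^r$ via condition (iii), and the phase/permutation bookkeeping giving $\|\hat\eta_i\hat q_i-q_i\|=O(\epsilon)$. The genuine gap is in your last step, which handles the minimizer $(q_1^*,\ldots,q_r^*)$ of \eqref{solve-F}. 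You assert that conditions (i)--(iv) make the map $(q_1,\ldots,q_r)\mapsto\bigl(\sum_i q_i^{\otimes m}\bigr)_{\Omega_m}$ have injective differential at the true decomposition, calling this ``equivalent'' to local uniqueness. That equivalence is false (local uniqueness does not imply a full-rank Jacobian; compare $x\mapsto x^3$ at $0$), and neither you nor the paper proves the Jacobian claim: what Theorem~\ref{thm:TD} provides is uniqueness of the rank-$r$ decomposition, a different statement. Worse, even if such a Jacobian condition held, the resulting error bound is \emph{local}: it converts a small residual into parameter closeness only for points already known to lie near $(q_1,\ldots,q_r)$ modulo phase and permutation. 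All you know about $(q_1^*,\ldots,q_r^*)$ is that its residual is $O(\epsilon)$; a priori it could be a far-away point of parameter space with small residual, and nothing in your argument excludes this.

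The paper closes exactly this hole with a different device. It sets $\cF_m^*:=\sum_{i=1}^r(q_i^*)^{\otimes m}$, notes $\|(\cF_m^*-\cF_m)_{\Omega_m}\|=O(\epsilon)$ by the triangle inequality, and then re-runs the algorithm --- together with the stability analysis already established in the first part of the proof --- on the input $(\cF_m^*)_{\Omega_m}$: since $\cF_m^*$ is an exact rank-$r$ tensor, the algorithm returns its exact decomposition $\{q_i^*\}$, while stability forces that output to be $O(\epsilon)$-close to $\{q_i\}$ up to $m$th roots of unity and a permutation. This is a global argument: it applies to the minimizer wherever it lies, and it invokes no unproven Jacobian hypothesis. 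To repair your proof you would either adopt this device, or genuinely prove a global ``identifiability with stability'' statement (every exact rank-$r$ tensor whose $\Omega_m$-subtensor is $O(\epsilon)$-close to $(\cF_m)_{\Omega_m}$ has components $O(\epsilon)$-close to the $q_i$), which is in substance what the paper's re-running trick delivers.
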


\begin{proof}
The vectors $(1,u_1),\,\ldots,\,(1,u_r)$ in \eqref{F-decomp} are scalar multiples of $\mu_1,\,\ldots,\mu_r$ respectively. By Conditions (ii) and (iii), the vectors $\{[u_i]_{\sB_0}\}_{i=1}^r$ and $\{[u_i]_{\cO_\alpha}\}_{i=1}^r$ are both linearly independent, which satisfies the condition of Lemma~\ref{lemma: lid A}. Thus Conditions (i)-(iii) imply that there exists a unique generating matrix $G$ for $\cF_m$ by Theorem~\ref{thm: G} and it can be calculated by \reff{solve-G}. By Lemma~\ref{lemma: lid A}, the matrix $A[\alpha, \cF_m]$ has full column rank. It holds that
\begin{equation}
    \|A[\alpha, \cF_m]-A[\alpha, \widehat{\cF}_m]\|\leq\epsilon,\,\|b[\alpha, \cF_m]-b[\alpha, \widehat{\cF}_m]\|\leq\epsilon,
\end{equation}
for $\alpha\in\sB_1$. When $\epsilon$ is small enough, the matrix $A[\alpha, \widehat{\cF}_m]$ also has full column rank. Then the linear least square problems \eqref{solve-G-ls} have unique solutions and the collection of solutions $\widehat{G}$ satisfies that
\begin{equation*}
    \|G-\widehat{G}\|=O(\epsilon),
\end{equation*}
where $O(\epsilon)$ depends on $\cF_m$ (see \cite[Theorem~3.4]{Demmel}). Since $N_l(\widehat{G})$ is part of the generating matrix $\widehat{G}$ for each $l=k+1,\ldots,n$, we have
\begin{equation*}
    \|N_l(\widehat{G})-{N}_l(G)\|\le \|\widehat{G}-{G}\|=O(\epsilon), \quad l=k+1,\ldots,n,
\end{equation*}
which implies that $\|\widehat{N}(\xi)-{N}(\xi)\|=O(\epsilon) $. By condition (iv) we know that the matrix $\widehat{N}(\xi)$ has distinct eigenvalues $\hat{w}_1,\ldots,\hat{w}_r$ if $\epsilon$ is small enough. So the matrix $N(\xi)$ has a set of  eigenvalues $\tilde{w}_i$ such that
\begin{equation*}
    \|\hat{w}_i-\tilde{w}_i\|=O(\epsilon).
\end{equation*}
This follows from Proposition 4.2.1 in \cite{chatelin2012}. The constants inside the above $O(\cdot)$ depend only on $\cF_m$ and $\xi$. The vectors $\tilde{w}_1,\ldots,\tilde{w}_r$ are multiples of the vectors
$
(\mu_1)_{k+1:n},\ldots,(\mu_r)_{k+1:n}
 $
 respectively. Thus, we conclude that $[\tilde{w}_1]_{J_{3}},\ldots,[\tilde{w}_r]_{J_{3}}$ are linearly independent by condition (iii). When $\epsilon$ is small, the vectors
 $
 [\hat{w}_1]_{J_{3}},\ldots,[\hat{w}_r]_{J_{3}}
  $
  are also linearly independent. For optimizers $\hat{\gamma}_i,\,\hat{v}_i,\,\hat{\lambda}_i$ of linear least square problems \eqref{coefest-1}, \eqref{coefest-2} and \eqref{coefest-lambda}, by \cite[Theorem~3.4]{Demmel}, we have
\begin{equation*}
    \|\hat{\gamma}_i-\gamma_i\|=O(\epsilon),\,\|\hat{v}_i-v_i\|=O(\epsilon),\,\|\hat{\lambda}_i-\lambda_i\|=O(\epsilon),
\end{equation*}
where constants inside $O(\cdot)$ depend on $\cF_m$ and $\xi$. By Theorem~\ref{thm:TD}, we have $\cF_m=\sum_{i=1}^r\tilde{q}_i^{\otimes m}$ where $\tilde{q}_i=(\tilde{\lambda})^{1/m}(1,\tilde{v}_i,\tilde{w}_i)$.The rank-$r$ decomposition of $\cF_m$ is unique up to scaling and permutation by Theorem \ref{thm:TD}. Thus, there exist scalars $\hat{\eta}_i$ such that $(\hat{\eta}_i)^m=1$ and $\hat{\eta}_i\tilde{q}_i=q_i$, up to a permutation of $q_1,\ldots,q_r$. Then for $\hat{q}_i=(\hat{\lambda})^{1/m}(1,\hat{v}_i,\hat{w}_i)$, we have $\|\hat{\eta}_i\hat{q}_i-q_i\|=O(\epsilon)$ where constants inside $O(\cdot)$ depend on $\cF_m$ and $\xi$.

Since $\|\hat{\eta}_i\hat{q}_i-q_i\|=O(\epsilon)$, we have $\|\cF_m-(\sum_{i=1}^r (\hat{q}_i)^{\otimes m})_{\Omega_m}\|=O(\epsilon)$. For the minimizer $(q_1^*,\ldots,q_r^*)$ of \eqref{solve-F}, it holds that
\begin{equation*}
\left\|\left(\widehat{\cF}_m-\sum_{i=1}^r(q_i^{*})^{\otimes m}
\right)_{\Omega_m}\right\|\le
\left\|\left(\widehat{\cF}_m-\sum_{i=1}^r (\hat{q}_i)^{\otimes m}\right)_{\Omega_m}\right\| = O(\epsilon).
\end{equation*}
For the tensor $\cF_m^{*}:=\sum_{i=1}^r (q_i^{*})^{\otimes m}$, we have
\begin{equation*}
    \|(\cF_m^{*}-\cF_m)_{\Omega_m}\|\le\|(\cF_m^{*}-\widehat{\cF}_m)_{\Omega_m}\|+\|(\widehat{\cF}_m-\cF_m)_{\Omega_m}\|=O(\epsilon)
\end{equation*}

If we apply Algorithm~\ref{algo:tensor-approx} to $(\cF_m^*)_{\Omega_m}$, we will get the exact decomposition $\cF_m^{*}=\sum_{i=1}^r (q_i^{*})^{\otimes m}$. By repeating the above argument, similarly we can obtain that $\|\eta_i^{*}q_i^{*}-q_i\|=O(\epsilon)$ for some $\eta_i^*$ such that $(\eta_i^*)^m=1$, where the constants in $O(\cdot)$ only depend on $\cF_m$ and $\xi$.
\end{proof}

\section{Learning General Diagonal Gaussian Mixture}\label{sc:gmm}

Let $y$ be the random variable of a diagonal Gaussian mixture model and 
$y_1,\ldots,y_N$ be i.i.d. samples drawn from the model. 
The moment tensors $M_m\coloneqq\mathbb{E}[y^{\otimes m}]$ can be estimated as follows
\begin{align*}
\widehat{M}_m&\coloneqq\frac{1}{N}(y_1^{\otimes m} + \cdots + y_N^{\otimes m}).
\end{align*}
Recall that $\cF_m=\sum_{i=1}^r\omega_i\mu_i^{\otimes m}$. By Corollary~\ref{moment-structure} and \eqref{high-tensor-moment}, we have
\begin{align*}
    (M_m)_{\Omega_m}&=(\cF_m)_{\Omega_m},
\end{align*}
 where $\Omega_{m}$ is the index set defined in \reff{index-set}. Let $\widehat{\cF}_{m}$ be such that
\begin{equation}\label{subtensor-approx}(\widehat{\cF}_m)_{\Omega_m}\coloneqq(\widehat{M}_m)_{\Omega_m}.
\end{equation}
We can apply Algorithm \ref{algo:tensor-approx} to find the low-rank approximation of $\widehat{\cF}_m$. Let $\widehat{\cF}_m\approx \sum_{i=1}^r (q_i^*)^{\otimes m}$ be the tensor approximation generated by Algorithm \ref{algo:tensor-approx}. By Theorem \ref{thm:tensor}, when $\epsilon=\|(\widehat{M}_m)_{\Omega_m}-({M}_m)_{\Omega_m}\|$ is small, there exists $\eta_i \in \bC$ such that $\eta_i^{m}=1$ and $\|\eta_i q_i^*-(\omega_i)^{1/m}\mu_i\|=O(\epsilon)$. The $\eta_i$ appears here because the vector $q_i^*$ can be complex even though $\widehat{\cF}_m$ is a real tensor. But $\omega_i, \mu_i$ are both real in Gaussian mixture models. In practice, we can choose the $\eta_i$ from all $m$th roots of $1$ that minimizes $\|\imag(\eta_iq_i^*)\|$. Let
\begin{equation} \label{eq:q}
    \check{q}_i \coloneqq \real(\eta_i q_i^*).
\end{equation}
We expect that $\check{q}_i \approx (\omega_i)^{1/m}\mu_i $. Then, we consider the tensor
\begin{equation*}
{\cF}_{t}=\omega_1\mu_1^{\otimes t}+\cdots+\omega_r\mu_r^{\otimes t}\approx(\omega_1)^{\frac{m-t}{m}}(\check{q}_1)^{\otimes t}+\cdots+(\omega_r)^{\frac{m-t}{m}}(\check{q}_r)^{\otimes t},
\end{equation*}
where $t$ is the smallest number such that $\binom{d}{t}\ge r$. It holds that $({\cF}_{t})_{\Omega_{t}}=({M}_{t})_{\Omega_{t}}\approx(\widehat{M}_{t})_{\Omega_{t}}$, so we obtain the scalars $(\omega_i)^{\frac{m-t}{m}}$ by solving the linear least square problem
\begin{equation} \label{solve-w}
\min\limits_{(\beta_1,\ldots,\beta_r)\in\mathbb{R}^r_+} \, \left\|
(\widehat{M}_{t})_{\Omega_{t}}-\sum_{i=1}^r\beta_i\left((\check{q}_i)^{\otimes t}\right)_{\Omega_{t}}\right\|^2.
\end{equation}
Let the optimizer of \reff{solve-w} be $(\beta_1^{\ast},\ldots,\beta_r^{\ast})$, then
\begin{equation}\label{est-w-mu}
    \hat{\omega}_i=(\beta^*)^{\frac{m}{m-t}}\quad\text{and}\quad\hat{\mu}_i=q^*_i/(\beta_i^*)^{\frac{1}{m-t}}
\end{equation}
should be reasonable approximations of $\omega_i$ and $\mu_i$ respectively.

To obtain more accurate results, we can use $(\hat{\omega}_1,\ldots,\hat{\omega}_r, \hat{\mu}_1,\ldots, \hat{\mu}_r)$ as starting points to solve the following nonlinear optimization
\begin{equation}\label{prob:nls-w mu}
\left\{\begin{array}{cl}
\min\limits_{ \substack{\omega_1,\ldots,\omega_r, \\ \mu_1,\ldots,\mu_r}  }&
\|(\widehat{M}_m)_{\Omega_m}-\sum\limits_{i=1}^r \omega_i (\mu_i^{\otimes m})_{\Omega_m}\|^2+\|(\widehat{M}_t)_{\Omega_t}-\sum\limits_{i=1}^r \omega_i (\mu_i^{\otimes t})_{\Omega_t}\|^2\\
\text{subject to}&\omega_1+\cdots+\omega_r=1,\,
     \omega_1,\ldots,\omega_r \ge 0,
\end{array} \right.
\end{equation}
and obtain the optimizer $(\omega_1^*,\ldots,\omega_r^*, \mu_1^*,\ldots, \mu_r^*)$.

Next, we will show how to calculate the diagonal covariance matrices. We define a label set
\begin{equation*}
	L_j=\{(j,j,i_1,\ldots,i_{m-2}):1\leq i_1<\cdots<i_{m-2}\leq d, \text{and } i_1\ne j,\ldots,i_{m-2}\ne j\}.
\end{equation*}
For $(j,j,i_1,\ldots,i_{m-2})\in L_j$, we have
\begin{equation*}
(M_{m})_{j,j,i_1,\ldots,i_{m-2}}=\sum\limits_{i=1}^r\omega_i\left((\mu_i)_j(\mu_i)_j(\mu_i)_{i_1}\cdots(\mu_i)_{i_{m-2}}+\Sigma^{(i)}_{jj}(\mu_i)_{i_1}\cdots(\mu_i)_{i_{m-2}}\right).
\end{equation*}
The above equation is a direct result of \reff{moment-structure} since all covariance matrices are diagonal. Let
\begin{equation}\label{cov-A}
\mathcal{A}\coloneqq M_m-\cF_m,\quad
\widehat{\mathcal{A}}\coloneqq\widehat{M}_m-(\check{q}_1)^{\otimes m}-\cdots-(\check{q}_r)^{\otimes m}.
\end{equation}
To get the estimation of covariance matrices $\Sigma_i=\text{diag}(\sigma_{i1}^2,\ldots,\sigma_{id}^2)$, we solve the nonnegative linear least square problems $(j=1,\ldots,d)$

\begin{equation}\label{solve-sigma}
\left\{ \baray{cl}
 \min\limits_{(\theta_{1j},\ldots,\theta_{rj})} & \left\|\left(\widehat{\mathcal{A}}\right)_{L_j}-    \sum\limits_{i=1}^r\theta_{ij}\omega^*_i \left((\mu_i^*)^{\otimes m-2}\right)_{\hat{L}_j}\right\|^2\\
 \mbox{subject to} & \theta_{1j} \ge 0,\ldots,\,\theta_{rj} \ge 0
\earay \right.
\end{equation}
where $\hat{L}_j=\{(i_1,\ldots,i_{m-2}):(j,j,i_1,\ldots,i_{m-2}) \in L_j\}$. The vector $((\mu_i^*)^{\otimes m-2})_{\hat{L}_j}$ has length $\binom{n}{m-2} \ge \binom{k}{p}\ge r$, where $k,p$ are constants in Algorithm \ref{algo:tensor-approx}. Therefore, $((\mu_1^*)^{\otimes m-2})_{\hat{L}_j},\ldots,((\mu_r^*)^{\otimes m-2})_{\hat{L}_j}$ are generically linearly independent and hence \eqref{solve-sigma} has a unique optimizer. Suppose the optimizer is $(\theta_{1j}^*,\ldots,\theta_{rj}^*)$. The covariance matrix $\hat{\Sigma}_i$ can be approximated as
\begin{equation}\label{Sig_i^opt}
\Sigma^*_i\coloneqq\{\diag((\theta_{i1}^*,\ldots,\theta_{id}^*))\},\, \sigma_{ij}^*\coloneqq \sqrt{\theta_{ij}^*}.	
\end{equation}

The following is the complete algorithm to recover the unknown parameters $\{\mu_i,\Sigma_i,\Omega_i\}_{i=1}^r$.

\begin{alg}  \label{algo:gaussian}
(Learning diagonal Gaussian mixture models.)
\begin{itemize}

\item [Input:] The $m$th order sample moment tensor $\widehat{M}_{m}$, the $t$th order sample moment tensor $\widehat{M}_{t}$, and the number of components $r$.

\item [Step~1.] Apply Algorithm~\ref{algo:tensor-approx} to subtensor $(\widehat{\cF}_m)_{\Omega_m}$ defined in \eqref{subtensor-approx}. Let $(q_1^*)^{\otimes m}+\cdots+(q_r^*)^{\otimes m}$ be the output incomplete tensor approximation for $\widehat{\cF}_m$.

\item [Step~2.] For $i=1,\ldots,r$, we choose $\eta_i$ such that $\eta_i^m=1$ and it minimizes $\|\text{Im}(\eta_i q_i^*)\|$. Let $\check{q}_i=\text{Re}(\eta_i q_i^*)$ as in \eqref{eq:q}.

\item [Step~3.] Solve \reff{solve-w} to get the optimizer $(\beta_1^*,\ldots,\beta_r^*)$ and compute $\hat{\omega}_i$, $\hat{\mu}_i$ as in \eqref{est-w-mu} for $i=1,\ldots,r$.

\item [Step~4.] Use $(\hat{\omega}_1,\ldots,\hat{\omega}_r, \hat{\mu}_1,\ldots, \hat{\mu}_r)$ as starting points to solve \eqref{prob:nls-w mu} to obtain the optimizer $(\omega_1^*,\ldots,\omega_r^*, \mu_1^*,\ldots, \mu_r^*)$.

\item [Step~5.] Solve the optimization \reff{solve-sigma} to get optimizers $\theta_{ij}^*$
and then compute $\Sig^*_i$ as in \reff{Sig_i^opt}.

\item [Output:] Mixture Gaussian parameters
$(\omega^*_i,\mu^*_i,\Sigma^*_i), i=1,\ldots,r$.

\end{itemize}
\end{alg}


When the sampled moment tensors are close to the accurate moment tensors, the parameters generated by Algorithm \ref{algo:gaussian} are close to the true model parameters. The analysis is shown in the following theorem.

\begin{theorem} \label{thm:gaussian approx}
   Given a $d$-dimensional diagonal Gaussian mixture model with parameters $\{(\omega_i,\mu_i,\Sigma_i):i\in[r]\}$ and $r$ no greater than the $r_{\max}$ in \eqref{rmax}. Let $\{(\omega^{*}_i,\mu^{*}_i,\Sigma^{*}_i):i\in[r]\}$ be the output of Algorithm~\ref{algo:gaussian}. If the distance $\epsilon :=\max(\|\widehat{M}_m-M_m\|,\|\widehat{M}_{t}-M_{t}\|) $ is small enough, $(\mu_1^{\otimes t})_{\Omega_t},\ldots,(\mu_r^{\otimes t})_{\Omega_t}$ are linearly independent, and the tensor $\cF_m=\sum_{i=1}^r\omega_i\mu_i^{\otimes m}$ satisfies the conditions of Theorem~\ref{thm:tensor}, then
   \begin{equation*}
       \|\mu^{*}_i-\mu_i\| = O(\epsilon), \|\omega_i^{*}-\omega_i\| = O(\epsilon),
\|\Sigma^{*}_{i}-\Sigma_i\| = O(\epsilon),
   \end{equation*}
   where the constants inside $O(\cdot)$ depend on parameters $\{(\omega_i,\mu_i,\Sigma_i):i\in[r]\}$ and the choice of $\xi$ in Algorithm~\ref{algo:gaussian}.
\end{theorem}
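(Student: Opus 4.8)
The plan is to propagate the $O(\epsilon)$ error through the five steps of Algorithm~\ref{algo:gaussian}, alternating between two tools: the stability of the incomplete tensor decomposition established in Theorem~\ref{thm:tensor}, and the standard perturbation bound for full-column-rank linear least squares \cite[Theorem~3.4]{Demmel}. At each step the crux is to verify that the relevant design matrix has full column rank, which follows from the assumed linear independences together with the fact that every quantity produced so far stays within $O(\epsilon)$ of its exact counterpart. For Steps~1--2 I would invoke Theorem~\ref{thm:tensor} directly: since $\cF_m$ satisfies conditions (i)--(iv), the output of Algorithm~\ref{algo:tensor-approx} obeys $\|\eta_i^* q_i^* - q_i\| = O(\epsilon)$ for some $m$-th roots of unity $\eta_i^*$, where $q_i = (\omega_i)^{1/m}\mu_i$ is a real, nonzero vector. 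Because $q_i$ is real, any root of unity $\eta \neq \eta_i^*$ makes $\imag(\eta q_i^*)$ of order $\|q_i\|$, so once $\epsilon$ is small the minimizer of $\|\imag(\eta q_i^*)\|$ in Step~2 is forced to be $\eta_i^*$; taking the real part then discards only the $O(\epsilon)$ imaginary remainder, giving $\|\check{q}_i - q_i\| = O(\epsilon)$.

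For Steps~3--4 I would first treat Step~3. The columns $((\check{q}_i)^{\otimes t})_{\Omega_t}$ of \eqref{solve-w} converge to $(\omega_i)^{t/m}(\mu_i^{\otimes t})_{\Omega_t}$, which are linearly independent by hypothesis, so for small $\epsilon$ the matrix has full column rank and \cite[Theorem~3.4]{Demmel} yields $\|\beta_i^* - (\omega_i)^{(m-t)/m}\| = O(\epsilon)$, the constraint $\beta_i \ge 0$ being inactive since the limit is positive. As $(\omega_i)^{(m-t)/m} > 0$, the maps in \eqref{est-w-mu} are smooth near the limit, so $\|\hat{\omega}_i - \omega_i\| = O(\epsilon)$ and $\|\hat{\mu}_i - \mu_i\| = O(\epsilon)$. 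For the refinement \eqref{prob:nls-w mu} in Step~4, I would note that the true parameters are feasible and make the objective at most $2\epsilon^2$, so its global minimizer attains objective $O(\epsilon^2)$; consequently $\cF_m^* := \sum_i \omega_i^*(\mu_i^*)^{\otimes m}$ and $\cF_t^* := \sum_i \omega_i^*(\mu_i^*)^{\otimes t}$ satisfy $\|(\cF_m^*-\cF_m)_{\Omega_m}\| = O(\epsilon)$ and $\|(\cF_t^*-\cF_t)_{\Omega_t}\| = O(\epsilon)$. Since $\cF_m$ still satisfies the open conditions of Theorem~\ref{thm:tensor}, applying its stability conclusion to the nearby tensor $\cF_m^*$ (whose unique rank-$r$ decomposition is exactly $(\omega_i^*,\mu_i^*)$ by Theorem~\ref{thm:TD}) gives $(\omega_i^*)^{1/m}\mu_i^* = q_i + O(\epsilon)$; disentangling $\omega_i^*$ from $\mu_i^*$ through the $t$-th moment exactly as in Step~3 then yields $\|\omega_i^* - \omega_i\| = O(\epsilon)$ and $\|\mu_i^* - \mu_i\| = O(\epsilon)$, after the common permutation matching components.

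Finally, for Step~5 I would use that $\widehat{\mathcal{A}}$ in \eqref{cov-A} approximates $\mathcal{A} = M_m - \cF_m$ to within $O(\epsilon)$, since it is built from $\widehat{M}_m$ and the $O(\epsilon)$-accurate $\check{q}_i$, and that on the set $L_j$ the exact tensor $\mathcal{A}$ equals $\sum_i \omega_i \sigma_{ij}^2\, (\mu_i^{\otimes(m-2)})_{\hat{L}_j}$. The columns $\omega_i^*((\mu_i^*)^{\otimes(m-2)})_{\hat{L}_j}$ of the nonnegative least squares \eqref{solve-sigma} have length $\binom{n}{m-2} \ge r$ and converge to the linearly independent vectors $\omega_i(\mu_i^{\otimes(m-2)})_{\hat{L}_j}$, so the problem is well-conditioned and \cite[Theorem~3.4]{Demmel}, together with the Lipschitz stability of the projection onto the nonnegative orthant, gives $|\theta_{ij}^* - \sigma_{ij}^2| = O(\epsilon)$. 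Because $\Sigma_i^* = \diag(\theta_{i1}^*,\ldots,\theta_{id}^*)$ is compared entrywise against $\Sigma_i = \diag(\sigma_{i1}^2,\ldots,\sigma_{id}^2)$, this yields $\|\Sigma_i^* - \Sigma_i\| = O(\epsilon)$ with no square-root loss, since the comparison is at the level of variances $\theta_{ij}^* \approx \sigma_{ij}^2$ rather than standard deviations.

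I expect the main obstacle to be Step~4: converting the purely variational fact that the global minimizer of \eqref{prob:nls-w mu} has objective $O(\epsilon^2)$ into a quantitative parameter bound. Since the problem is nonconvex, a small objective value does not by itself localize the minimizer, and the argument must lean on the uniqueness and Lipschitz stability of the rank-$r$ decomposition (Theorem~\ref{thm:tensor}) to control the $m$-th-moment part and on the linear independence of $(\mu_i^{\otimes t})_{\Omega_t}$ to split $\omega_i$ from $\mu_i$. A secondary subtlety, already flagged above, is confirming that for sufficiently small $\epsilon$ the imaginary-part-minimizing root of unity in Step~2 is exactly the correct branch $\eta_i^*$ rather than a competing root; this is where the reality and genericity of the $q_i$ are essential.
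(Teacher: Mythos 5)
Your proposal is correct and takes essentially the same route as the paper's own proof: Theorem~\ref{thm:tensor} handles Steps 1--2, the least-squares perturbation bound of \cite[Theorem~3.4]{Demmel} handles Steps 3 and 5, and Step 4 is resolved exactly as in the paper by bounding the optimal value of \eqref{prob:nls-w mu}, applying Theorem~\ref{thm:tensor} to $\cF_m^*=\sum_i\omega_i^*(\mu_i^*)^{\otimes m}$, and then separating $\omega_i^*$ from $\mu_i^*$ via the $t$-th order moment. The root-of-unity branch issue you flag (your claim that every $\eta\neq\eta_i^*$ gives $\|\imag(\eta q_i^*)\|$ of order $\|q_i\|$ fails for even $m$ with $\eta=-\eta_i^*$) is treated at the same informal level in the paper, so it does not distinguish your argument from theirs.
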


\begin{proof}
    We have
    \begin{gather*}
        \|(\widehat{\cF}_m-\cF_m)_{\Omega_m}\|=\|(\widehat{M}_m-M_m)_{\Omega_m}\| \le \epsilon,\\
        \|(\widehat{\cF}_{t}-\cF_{t})_{\Omega_{t}}\|=\|(\widehat{M}_{t}-M_{t})_{\Omega_{t}}\|\le\epsilon.
    \end{gather*}
    and $\cF_m,\,\cF_{t}$ satisfy conditions of Theorem~\ref{thm:tensor}. They imply that $\|\eta_i^*q_i^*-q_i\|=O(\epsilon)$ for some $(\eta_i^*)^m=1$ by Theorem~\ref{thm:tensor}. The constants inside $O(\epsilon)$ depend on the parameters of the Gaussian model and vector $\xi$. Since vectors $q_i$ are real, we have $\|\text{Im}(\eta_i^*q_i^*)\|=O(\epsilon)$. When $\epsilon$ is small enough, such $\eta_i^*$ minimizes $\|\text{Im}(\eta_i^*q_i^*)\|$ and we have
    \begin{equation*}
        \|\text{Re}(\eta_i^*q_i^*)-q_i\|\leq\|\eta_i^*q_i^*-q_i\|=O(\epsilon).
    \end{equation*}
    Let $\check{q}_i\coloneqq\text{Re}(\eta_i^*q_i^*)$. When $\epsilon$ is small, vectors $(\check{q}_1^{\otimes t})_{\Omega_t},\ldots,(\check{q}_r^{\otimes t})_{\Omega_t}$ are linearly independent since $(\mu_1^{\otimes t})_{\Omega_t},\ldots,(\mu_r^{\otimes t})_{\Omega_t}$ are linearly independent by our assumption. It implies that the problem \eqref{solve-w} has a unique solution. The weights $\hat{\omega}_i$ and mean vectors $\hat{\mu}_i$ can be calculated by \eqref{est-w-mu}. Since $\|(\widehat{M}_{t}-M_{t})_{\Omega_{t}}\|\le\epsilon$ and $\|\check{q}_i-q_i\|=O(\epsilon)$, we have $\|\omega_i - \hat{ \omega }_i \|=O(\epsilon)$ (see \cite[Theorem~3.4]{Demmel}). The approximation error for the mean vectors is
    \begin{equation*}
        \|\hat{\mu}_i-\mu_i\|=\|\check{q}_i/(\hat{\omega}_i)^{1/m}-{q}_i/(\omega_i)^{1/m}\| = O(\epsilon).
    \end{equation*}
    The constants inside $O(\epsilon)$ depend on parameters of the Gaussian mixture model and $\xi$.

    We obtain optimizers $\omega_i$ and $\mu_i$ by solving the problem \eqref{prob:nls-w mu}, so it holds
    \begin{equation*}
        \left\|(\widehat{M}_m)_{\Omega_m}-\sum\limits_{i=1}^r \omega^*_i \left((\mu^*_i)^{\otimes m}\right)_{\Omega_m}\right\|=O(\epsilon).
    \end{equation*}
    Let $\cF_m^*:=\sum_{i=1}^r \omega_i^{*} (\mu_i^{*})^{\otimes m}$ and $\cF_{t}^*:=\sum_{i=1}^r \omega_i^{*} (\mu_i^{*})^{\otimes t}$, then
    \begin{gather*}
    \|(\cF_m^*-\cF_m)_{\Omega_m}\|\le\|(\widehat{\cF}_m-\cF_m)_{\Omega_m}\|+\|(\widehat{\cF}_m-\cF_m^*)_{\Omega_m}\| = O(\epsilon).\\
    \|(\cF_{t}^*-\cF_{t})_{\Omega_{t}}\|\le\|(\widehat{\cF}_{t}-\cF_{t})_{\Omega_{t}}\|+\|(\widehat{\cF}_{t}-\cF_{t}^*)_{\Omega_{t}}\| = O(\epsilon).
    \end{gather*}
    By Theorem~\ref{thm:tensor}, we have $\|(\omega_i^*)^{1/m}\mu_i^*-q_i\|=O(\epsilon)$. Since we are optimizing \eqref{prob:nls-w mu}, it also holds that
    \begin{equation*}
       \left\| (\widehat{M}_{t})_{\Omega_t}-\sum_{i=1}^r  \omega_i \left((\mu_i^*)^{\otimes t} \right)_{\Omega_t} \right\|= \left\| (\widehat{M}_{t})_{\Omega_t}-\sum_{i=1}^r  (\omega_i^*)^{\frac{m-t}{m}} \left(((\omega_i^*)^{1/m}\mu_i^*)^{\otimes t} \right)_{\Omega_t} \right\|=O(\epsilon).
    \end{equation*}
    Combining the above with $\|(\widehat{M}_{t}-M_{t})_{\Omega_{t}}\|=O(\epsilon)$, we get $\|(\omega_i^*)^{\frac{m-t}{m}}-\omega_i^{\frac{m-t}{m}}\|=O(\epsilon)$ by \cite[Theorem~3.4]{Demmel} and hence $\|\omega_i^*-\omega_i\|=O(\epsilon)$. For mean vectors $\mu_i$ we have
    \begin{equation*}
        \|\mu_i^*-\mu_i\|=\|((\omega_i^*)^{1/m}\mu_i^*)/(\omega_i^*)^{1/m}-q_i/(\omega_i)^{1/m}\|=O(\epsilon).
    \end{equation*}
    The constants inside the above $O(\cdot)$ only depend on parameters $\{(\omega_i,\mu_i,\Sigma_i):i\in[r]\}$ and $\xi$.

    We obtain the covariance matrices $\Sigma_i$ by solving \eqref{solve-sigma}. It holds that
    \begin{gather*}
        \|\omega^*_i(\mu_i^*)^{\otimes(m-2)}-\omega_i\mu_i^{\otimes(m-2)}\|=O(\epsilon),\\
        \|\widehat{\mathcal{A}}-\mathcal{A}\|\leq\|\widehat{M}_m-M_m\|+\|\sum\limits_{i=1}^r(q_i^*)^{m}-\cF_m\|\leq O(\epsilon),
    \end{gather*}
    where $\widehat{\mathcal{A}}$ and $\mathcal{A}$ are defined in \eqref{cov-A}. The tensor $\cF_m$ satisfies the condition of Theorem \ref{thm:tensor}, so the tensors $\mu_1^{\otimes(m-2)},\ldots,\mu_r^{\otimes(m-2)}$ are linearly independent. It implies that $\{\omega^*_i(\mu_i^*)^{\otimes(m-2)}\}_{i=1}^r$ are linearly independent when $\epsilon$ is small. Therefore, \eqref{solve-sigma} has a unique solution for each $j$. By \cite[Theorem~3.4]{Demmel}, we have
    \begin{equation*}
    \|(\sigma_{ij}^*)^2-(\sigma_{ij})^2\| = O(\epsilon).
    \end{equation*}
    It implies that $\|\Sigma_i^*-\Sigma_i \| = O(\epsilon)$, where the constants inside $O(\cdot)$ only depend on parameters $\{(\omega_i,\mu_i,\Sigma_i):i\in[r]\}$ and $\xi$.
\end{proof}

\begin{remark}
Given the dimension $d$ and the highest order of moment $m$, 
the largest number of components in the Gaussian mixture model that 
Algorithm~\ref{algo:gaussian} can learn is the same as 
the largest rank $r_{max}$ as in Theorem \ref{largest-r}, i.e.,
\begin{equation*}
r_{\text{max}}=\max(\binom{ k^*}{p^*}, \binom{d-3-k^*}{m-1-p^*}),
\end{equation*}
where $p^*=\lfloor \frac{m-1}{2} \rfloor$ and $k^*$ is largest integer $k$ such that
\[  \binom{k}{p^*}\le\binom{d-k-2}{m-p^*-1} . \]
Given a $d$-dimensional Gaussian mixture model with $r$ components, 
we can use Theorem \ref{largest-r} to obtain the smallest order $m$ 
required for the Algorithm \ref{algo:gaussian} and then apply 
Algorithm~\ref{algo:gaussian} to learn the Gaussian mixture model using the $m$th order moment.
\end{remark}

\section{Numerical Experiments}\label{sc:exp}
This section provides numerical experiments for our proposed methods. 
The computation is implemented in {\tt MATLAB} R2023a,
on a personal computer with Intel(R)Core(TM)i7-9700K CPU@3.60GHz and RAM 16.0G. The {\tt MATLAB} function \texttt{lsqnonlin} is used to solve \eqref{solve-F}
in Algorithm~\ref{algo:tensor-approx} and the {\tt MATLAB} function \texttt{fmincon}
is used to solve \eqref{prob:nls-w mu} in Algorithm~\ref{algo:gaussian}.

\subsection{Incomplete tensor decomposition}
In this subsection, we present numerical experiments for Algorithm \ref{algo:iSTD}. We construct
\begin{equation}\label{eq:randomF}
    \cF_m=\sum_{i=1}^r q_i^{\otimes m} \in \textrm{S}^m(\bC^{d})
\end{equation}
by randomly generating each $q_i\in\mathbb{R}^d$ in Gaussian distribution by the {\tt randn} function in {\tt MATLAB}. Then we apply Algorithm \ref{algo:iSTD} to the subtensor $(\cF_m)_{\Omega_m}$ to calculate the rank-$r$ tensor decomposition. The relative error of tensors and components are used to measure the decomposition result
\begin{equation*}
\text{decomp-err}_m\coloneqq\frac{\|(\cF_m-\widetilde{\cF}_m)_{\Omega_{m}}\|}{\|(\cF_m)_{\Omega_m}\|},\quad\text{vec-err-max}\coloneqq\max\limits_i\frac{\|q_i-\tilde{q}_i\|}{\|q_i\|},
\end{equation*}
where $\widetilde{\cF}_m$, $\tilde{q}_i$ are output of Algorithm \ref{algo:iSTD}. We choose the values of $d, m$ as
\begin{equation*}
    d = 15,\,25,\,30,\,40,\quad m = 3,\,4,\,5,\,6,\,7,
\end{equation*}
and $r$ as largest computable rank in Theorem~\ref{largest-r} given $d$ and $m$. For each $(d, m, r)$, we generate $100$ random instances, except for the case $(40, 7, 969)$ where $20$ instances are generated due to the long computation time. The min, average, and max relative errors of tensors for each dimension $d$, order $m$, and the average relative errors of component vectors are shown in Table~\ref{tensor-result}. The results show that Algorithm~\ref{algo:iSTD} finds the correct decomposition of randomly generated tensors.

{
    
\begin{table}[htbp]
  \centering
  \caption{The performance of Algorithm~\ref{algo:iSTD}}\label{tensor-result}
    \scalebox{0.8}{
    \begin{tabular}{ccccccc}
    \toprule
       & & & & $\text{decomp-err}_{m}$ &  \\
    \cmidrule{4-6}
     d & m & r & min & average & max & vec-err-max \\
    \midrule
    \multirow{7}{*}{$15$} & $3$ & $6$ & $1.7\cdot 10^{-15}$ & $3.1\cdot 10^{-12}$ & $1.7\cdot 10^{-10}$ & $1.1\cdot 10^{-11}$ \\
    \cmidrule{4-6}
     & $4$ & $8$ & $4.0\cdot 10^{-15}$ & $7.8\cdot 10^{-10}$ & $7.7\cdot 10^{-8}$ & $1.2\cdot 10^{-10}$ \\
     \cmidrule{4-6}
     & $5$ & $15$ & $1.9\cdot 10^{-14}$ & $2.5\cdot 10^{-11}$ & $8.7\cdot 10^{-10}$ & $9.1\cdot 10^{-11}$ \\
     \cmidrule{4-6}
     & $6$ & $20$ & $5.2\cdot 10^{-13}$ & $2.3\cdot 10^{-10}$ & $1.2\cdot 10^{-8}$ & $9.5\cdot 10^{-10}$ \\
     \cmidrule{4-6}
     & $7$ & $20$ & $7.4\cdot 10^{-14}$ & $1.7\cdot 10^{-10}$ & $1.3\cdot 10^{-8}$ & $3.4\cdot 10^{-10}$\\
     \midrule
    \multirow{7}{*}{$25$} & $3$ & $11$ & $9.3\cdot 10^{-15}$ & $7.3\cdot 10^{-12}$ & $6.3\cdot 10^{-10}$ & $1.3\cdot 10^{-11}$ \\
    \cmidrule{4-6}
     & $4$ & $16$ & $6.1\cdot 10^{-14}$ & $1.0\cdot 10^{-10}$ & $9.1\cdot 10^{-9}$ & $3.5\cdot 10^{-10}$ \\
     \cmidrule{4-6}
     & $5$ & $55$ & $2.9\cdot 10^{-12}$ & $4.4\cdot 10^{-9}$ & $1.2\cdot 10^{-7}$ & $3.8\cdot 10^{-8}$ \\
     \cmidrule{4-6}
     & $6$ & $84$ & $9.3\cdot 10^{-11}$ & $7.2\cdot 10^{-8}$ & $1.7\cdot 10^{-6}$ & $4.6\cdot 10^{-7}$ \\
     \cmidrule{4-6}
     & $7$ & $165$ & $1.4\cdot 10^{-10}$ & $1.4\cdot 10^{-7}$ & $4.1\cdot 10^{-6}$ & $1.7\cdot 10^{-6}$ \\
     \midrule
     \multirow{7}{*}{$30$} & $3$ & $14$ & $3.2\cdot 10^{-14}$ & $7.1\cdot 10^{-12}$ & $2.2\cdot 10^{-10}$ & $4.3\cdot 10^{-11}$ \\
     \cmidrule{4-6}
     & $4$ & $21$ & $3.6\cdot 10^{-13}$ & $1.6\cdot 10^{-10}$ & $2.4\cdot 10^{-8}$ & $3.9\cdot 10^{-10}$ \\
     \cmidrule{4-6}
     & $5$ & $91$ & $3.3\cdot 10^{-11}$ & $1.5\cdot 10^{-7}$ & $5.3\cdot 10^{-6}$ & $6.2\cdot 10^{-7}$ \\
     \cmidrule{4-6}
     & $6$ & $136$ & $1.0\cdot 10^{-10}$ & $1.7\cdot 10^{-7}$ & $8.3\cdot 10^{-6}$ & $1.9\cdot 10^{-6}$\\
     \cmidrule{4-6}
     & $7$ & $364$ & $2.4\cdot 10^{-8}$ & $2.4\cdot 10^{-6}$ & $2.7\cdot 10^{-5}$ & $4.1\cdot 10^{-5}$ \\
     \midrule
     \multirow{7}{*}{$40$} & $3$ & $19$ & $9.4\cdot 10^{-14}$ & $5.9\cdot 10^{-12}$ & $7.6\cdot 10^{-11}$ & $2.4\cdot 10^{-11}$ \\
     \cmidrule{4-6}
     & $4$ & $29$ & $4.1\cdot 10^{-13}$ & $5.4\cdot 10^{-11}$ & $6.9\cdot 10^{-10}$ & $1.4\cdot 10^{-10}$ \\
     \cmidrule{4-6}
     & $5$ & $171$ & $1.6\cdot 10^{-10}$ & $1.3\cdot 10^{-7}$ & $1.4\cdot 10^{-6}$ & $1.1\cdot 10^{-6}$ \\
     \cmidrule{4-6}
     & $6$ & $286$ & $4.5\cdot 10^{-9}$ & $5.9\cdot 10^{-6}$ & $1.1\cdot 10^{-4}$ & $6.0\cdot 10^{-5}$ \\
     \cmidrule{4-6}
     & $7$ & $969$ & $7.8\cdot 10^{-7}$ & $1.2\cdot 10^{-5}$ & $4.3\cdot 10^{-5}$ & $3.7\cdot 10^{-4}$ \\
     \bottomrule
    \end{tabular}}
\end{table}
}

\subsection{Incomplete tensor approximation}
We present numerical experiments for exploring the incomplete symmetric tensor approximation quality of Algorithm~\ref{algo:tensor-approx}. We first randomly generate the rank-$r$ symmetric $\cF$ as in \eqref{eq:randomF}. Then we generate a random tensor $\mathcal{E}$ with the same dimension and order as $\cF_m$ and scale it to a given norm $\epsilon$, i.e. $\|\mathcal{E}_m\|=\epsilon$. Let
$\widehat{\cF}_m=\cF_m+\mathcal{E}_m$. Algorithm~\ref{algo:tensor-approx} is applied to the subtensor $(\widehat{\cF}_m)_{\Omega}$ to compute the rank-$r$ approximation $\cF_m^*$. The approximation quality of $\cF_m^*$ can be measured by the absolute error and the relative error
\begin{equation*}
    \text{abs-err}_m\coloneqq\|(\cF_m^*-\cF_m)_{\Omega_m}\|,\quad\text{rel-err}_m\coloneqq\frac{\|(\cF_m^*-\widehat{\cF}_m)_{\Omega_m}\|}{\|(\mathcal{E}_m)_{\Omega_m}\|}.
\end{equation*}
We choose the values of $d,\,m,\,\epsilon$ as
\begin{equation*}
    d=15,\,25,\quad m=3,\,4,\,5,\,6,\quad\epsilon=0.1,\,0.01,\,0.001,
\end{equation*}
and $r$ as largest computable rank in Theorem~\ref{largest-r} given $d$ and $m$. For each $(d,\,m,\,r,\,\epsilon)$, we generate $100$ instances of $\widehat{\cF}_m$(for the case $(25,\,6,\,r,\epsilon)$, $20$ instances are generated due to long computational time) and record the minimum, average, maximum of $\text{abs-err}_m$, $\text{rel-err}_m$ respectively. For the case when $d=15$, the results are reported in Table~\ref{tensor-result1}. For the case when $d=25$, the results are reported in Table~\ref{tensor-result2}. For all instances, the output tensor of Algorithm~\ref{algo:tensor-approx} provides a good rank-$r$ approximation.

{\footnotesize
\begin{table}[htbp]
  \centering
  \caption{The performance of Algorithm~\ref{algo:tensor-approx} when $d=15$}
  \label{tensor-result1}
    \begin{tabular}{cccccccccc}
    \toprule
     &  & & &$\text{rel-error}$ & &  & &$\text{abs-error}$  \\
    \cmidrule{4-6} \cmidrule{8-10}
    $m$ & $r$ & $\eps$ & min & average & max & & min & average & max \\
    \midrule
    \multirow{4}{*}{3} & \multirow{4}{*}{6} & 0.1 & 0.8452 & 0.8953 & 0.9258 & & 0.0378 & 0.0444 & 0.0534 \\
     \cmidrule{4-6} \cmidrule{8-10}
     & & 0.01 & 0.8549 & 0.8947 & 0.9280 & & 0.0037 & 0.0045 & 0.0052 \\
     \cmidrule{4-6} \cmidrule{8-10}
     & & 0.001 & 0.8581 & 0.8969 & 0.9337 & & $3.5\cdot 10^{-4}$ & $4.4\cdot 10^{-4}$ & $5.1\cdot 10^{-4}$ \\
    \midrule
    \multirow{4}{*}{4} & \multirow{4}{*}{8} & 0.1 & 0.9382 & 0.9544 & 0.9666 & & 0.0256 & 0.0298 & 0.0346 \\
     \cmidrule{4-6} \cmidrule{8-10}
     &  & 0.01 & 0.9409 & 0.9569 & 0.9700 & & 0.0024 & 0.0029 & 0.0034 \\
     \cmidrule{4-6} \cmidrule{8-10}
     &  & 0.001 & 0.9333 & 0.9547 & 0.9692 & & $2.5\cdot 10^{-4}$ & $3.0\cdot 10^{-4}$ & $3.6\cdot 10^{-4}$ \\
    \midrule
    \multirow{4}{*}{5} & \multirow{4}{*}{15} & 0.1 & 0.9521 & 0.9612 & 0.9689 & & 0.0248 & 0.0275 & 0.0306 \\
     \cmidrule{4-6} \cmidrule{8-10}
     &  & 0.01 & 0.9529 & 0.9613 & 0.9690 & & 0.0025 & 0.0028 & 0.0030 \\
     \cmidrule{4-6} \cmidrule{8-10}
     &  & 0.001 & 0.9539 & 0.9615 & 0.9704 & & $2.4\cdot 10^{-4}$ & $2.7\cdot 10^{-4}$ & $3.0\cdot 10^{-4}$ \\
     \midrule
     \multirow{4}{*}{6} & \multirow{4}{*}{20} & 0.1 & 0.9625 & 0.9697 & 0.9767 & & 0.0215 & 0.0244 & 0.0271 \\
     \cmidrule{4-6} \cmidrule{8-10}
     &  & 0.01 & 0.9620 & 0.9694 & 0.9737 & & 0.0023 & 0.0025 & 0.0027 \\
     \cmidrule{4-6} \cmidrule{8-10}
     &  & 0.001 & 0.9619 & 0.9696 & 0.9769 & & $2.1\cdot 10^{-4}$ & $2.4\cdot 10^{-4}$ & $2.7\cdot 10^{-4}$ \\
     \bottomrule
    \end{tabular}
\end{table}
}

{\footnotesize
\begin{table}[htbp]
  \centering
  \caption{The performance of Algorithm~\ref{algo:tensor-approx} when $d=25$}
  \label{tensor-result2}
    \begin{tabular}{cccccccccc}
    \toprule
     &  & & &$\text{rel-error}$ & &  & &$\text{abs-error}$ \\
    \cmidrule{4-6} \cmidrule{8-10}
    $m$ & $r$ & $\eps$ & min & average & max & & min & average & max \\
    \midrule
    \multirow{4}{*}{3} & \multirow{4}{*}{11} & 0.1 & 0.9248 & 0.9377 & 0.9488 & & 0.0316 & 0.0347 & 0.0380 \\
     \cmidrule{4-6} \cmidrule{8-10}
     & & 0.01 & 0.9257 & 0.9380 & 0.9484 & & 0.0032 & 0.0035 & 0.0038 \\
     \cmidrule{4-6} \cmidrule{8-10}
     & & 0.001 & 0.9239 & 0.9383 & 0.9504 & & $3.1\cdot 10^{-4}$ & $3.4\cdot 10^{-4}$ & $3.8\cdot 10^{-4}$ \\
    \midrule
    \multirow{4}{*}{4} & \multirow{4}{*}{16} & 0.1 & 0.9809 & 0.9840 & 0.9861 & & 0.0166 & 0.0178 & 0.0194 \\
     \cmidrule{4-6} \cmidrule{8-10}
     &  & 0.01 & 0.9813 & 0.9839 & 0.9868 & & 0.0016 & 0.0018 & 0.0019 \\
     \cmidrule{4-6} \cmidrule{8-10}
     &  & 0.001 & 0.9808 & 0.9838 & 0.9860 & & $1.7\cdot 10^{-4}$ & $1.8\cdot 10^{-4}$ & $1.9\cdot 10^{-4}$ \\
    \midrule
    \multirow{4}{*}{5} & \multirow{4}{*}{55} & 0.1 & 0.9854 & 0.9870 & 0.9878 & & 0.0156 & 0.0161 & 0.0170 \\
     \cmidrule{4-6} \cmidrule{8-10}
     &  & 0.01 & 0.9858 & 0.9871 & 0.9884 & & 0.0015 & 0.0016 & 0.0017 \\
     \cmidrule{4-6} \cmidrule{8-10}
     &  & 0.001 & 0.9856 & 0.9870 & 0.9882 & & $1.5\cdot 10^{-4}$ & $1.6\cdot 10^{-4}$ & $1.7\cdot 10^{-4}$ \\
     \midrule
     \multirow{4}{*}{6} & \multirow{4}{*}{84} & 0.1 & 0.9938 & 0.9940 & 0.9943 & & 0.0106 & 0.0109 & 0.0111 \\
     \cmidrule{4-6} \cmidrule{8-10}
     & & 0.01 & 0.9939 & 0.9942 & 0.9946 & & 0.0011 & 0.0011 & 0.0011 \\
     \cmidrule{4-6} \cmidrule{8-10}
     & & 0.001 & 1.0001 & 1.0046 & 1.0102 & & $1.6\cdot 10^{-4}$ & $1.8\cdot 10^{-4}$ & $2.1\cdot 10^{-4}$ \\
     \bottomrule
    \end{tabular}
\end{table}
}

\subsection{Gaussian Mixture Approximation}
We explore the performance of Algorithm~\ref{algo:gaussian} for learning diagonal Gaussian mixture model. We compare it with the classical EM algorithm using the \texttt{MATLAB} function \texttt{fitgmdist} (\texttt{MaxIter} is set to be 100 and \texttt{RegularizationValue} is set to be $0.001$). The dimension $d=15$ and the orders of tensors $m=3,\,4,\,5,\,6$ are tested. The largest possible values of $r$ as in Theorem \ref{largest-r} are tested for each $(d, m)$. We generate $20$ random instances of $\{(\omega_i,\mu_i,\Sigma_i):i=1,\ldots,r\}$ for each $(d, m)$. For the weights $\omega_1,\ldots,\omega_r$, we randomly generate a positive vector $s\in \mathbb{R}^r$ and let $\omega_i=\frac{s_i}{\sum_{i=1}^r \omega_i}$. For each diagonal covariance matrix $\Sigma_i\in\mathbb{R}^{d\times d}$, we use the square of a random vector generated by \texttt{MATLAB} function \texttt{randn} to be diagonal entries. Each example is generated from one of $r$ component Gaussians and the probability that the sample comes from the $i$th Gaussian is the weight $\omega_i$. Algorithm~\ref{algo:gaussian} and EM algorithm are applied to learn the Gaussian mixture model from samples. After obtaining estimated parameters $(\omega_i,\,\mu_i,\,\Sigma_i)$ of the model, the likelihood of the sample for each component Gaussian distribution is calculated and we assign the sample to the group that corresponds to the maximum likelihood. We use classification accuracy, i.e. the ratio of correct assignments, to measure the performance of two algorithms. The accuracy comparison between two algorithms is shown in Table~\ref{gaussian-result}. As one can see, the performance of Algorithm~\ref{algo:gaussian} is better than EM algorithm in all tested cases.

{\footnotesize
\begin{table}[htbp]
  \centering
  \caption{Comparison between Algorithm \ref{algo:gaussian} and EM for learning Gaussian mixtures}\label{gaussian-result}
    \begin{tabular}{ccccc}
    \toprule
     & & &\multicolumn{2}{c}{accuracy}\\
    \cmidrule{4-5}
    $d$ & $m$ & $r$ & Algorithm \ref{algo:gaussian} & EM\\
    \midrule
    \multirow{5}{*}{15} & 3 & 6 & 0.9839 & 0.9567\\
    \cmidrule{4-5}
     & 4 & 8 & 0.9760 & 0.9451\\
     \cmidrule{4-5}
     & 5 & 15 & 0.9639 & 0.9382\\
     \cmidrule{4-5}
     & 6 & 20 & 0.9423 & 0.9285\\
     \bottomrule
    \end{tabular}
\end{table}
}

\section{Conclusion}
In this paper, we propose a novel approach to learn diagonal Gaussian mixture models using tensor decompositions. We first formulate the problem as an incomplete tensor decomposition problem and develop Algorithm \ref{algo:iSTD} to recover the low-rank decomposition given partial entries in the tensor. For a given dimension $d$ and order $m$, we find the largest rank our algorithm can compute in Theorem \ref{largest-r}. Then, we extend the tensor decomposition algorithm to find low-rank tensor approximations when errors exist in the given subtensor. It is summarized in Algorithm \ref{algo:tensor-approx}. Theorem \ref{thm:tensor} proves that the low-rank approximation generated by Algorithm \ref{algo:tensor-approx} is highly accurate if the input subtensor is close enough to the exact subtensor. Finally, we apply the incomplete tensor approximation algorithm to recover unknown parameters in Gaussian mixture models from samples. The unknown parameters are obtained by solving linear equations. It is presented in Algorithm \ref{algo:gaussian}. We prove in Theorem \ref{thm:gaussian approx} that the recovered parameters are close to the true parameters when the estimated high-order moments are accurate.

The proposed algorithms for learning diagonal Gaussian mixtures can handle a large number of Gaussian components by utilizing higher order moments. For general Gaussian mixture models with non-diagonal covariances, how to learn their unknown parameters using symmetric tensor decompositions and generating polynomials is an interesting question to investigate in the future.


\end{document}